\newcommand{\bfeta}{\boldsymbol{\eta}}
\newcommand{\bfxi}{\boldsymbol{\xi}}
\newcommand{\C}{\mathbb{C}}
\newcommand{\calA}{\mathcal{A}}
\newcommand{\calD}{\mathcal{D}}
\newcommand{\order}{\mathfrak{O}}
\renewcommand{\MR}[1]{}
\newcommand{\Q}{\mathbb{Q}}
\newcommand{\R}{\mathbb{R}}
\newcommand{\val}{\mathsf{value}}
\newcommand{\wNADS}{$w$-NADS}
\newcommand{\wNAF}{$w$-NAF}
\newcommand{\Z}{\mathbb{Z}}
\newcommand{\N}{\mathbb{N}}
\newcommand\undisp[1]{\bgroup\@displayfalse #1\egroup}
\newcommand{\tpmod}[1]{\ensuremath{\undisp{\pmod{#1}}}}
\DeclareMathOperator{\diag}{diag}
\DeclareMathOperator{\id}{id}
\DeclareMathOperator{\interior}{int}
\newtheorem{lemma}{Lemma}[section]
\newtheorem{theorem}{Theorem}
\newtheorem*{unnumbered_theorem}{Theorem}
\newtheorem{proposition}[lemma]{Proposition}
\newtheorem{corollary}[lemma]{Corollary}
\theoremstyle{definition}
\newtheorem{definition}[lemma]{Definition}
\theoremstyle{remark}
\newtheorem{remark}[lemma]{Remark}
\newtheorem{example}[lemma]{Example}
\begin{document}


\title[Existence and Optimality of $w$-Non-adjacent Forms]{Existence and
  Optimality of $w$-Non-adjacent Forms
  \\ with an Algebraic Integer Base}


\author{Clemens Heuberger}

\address{\parbox{12cm}{%
    Clemens Heuberger \\
    Institute of Mathematics \\
    Alpen-Adria-Universit\"at\\
    Universit\"atsstra\ss e 65--67, A-9020 Klagenfurt am W\"orthersee, Austria\\ }}

\email{\href{mailto:clemens.heuberger@aau.at}{clemens.heuberger@aau.at}}


\author{Daniel Krenn}

\address{\parbox{12cm}{%
    Daniel Krenn \\
    Institute of Optimisation and Discrete Mathematics (Math B) \\
    Graz University of Technology \\
    Steyrergasse 30/II, A-8010 Graz, Austria \\}} 

\email{\href{mailto:math@danielkrenn.at}{math@danielkrenn.at} \textit{or}
  \href{mailto:krenn@math.tugraz.at}{krenn@math.tugraz.at}}

\keywords{$\tau$-adic expansions, $w$-non-adjacent forms, redundant digit sets,
  lattices, existence, hyperelliptic curve cryptography, Koblitz curves,
  Frobenius endomorphism, scalar multiplication, Hamming weight, optimality,
  minimal expansions}

\subjclass[2010]{%
11A63; 
11H06  
11R04  
94A60
}


\thanks{The authors are supported by the Austrian Science Fund (FWF): S9606,
  that is part of the Austrian National Research Network ``Analytic
  Combinatorics and Probabilistic Number Theory'', and by the Austrian Science
  Fund (FWF): W1230, Doctoral Program
  ``Discrete Mathematics''.}

\thanks{Clemens Heuberger is also supported by the  Austrian Exchange Service
  \"OAD, project number HU 04/2010}


\begin{abstract}
  We consider digital expansions in lattices with endomorphisms acting as
  base. We focus on the $w$-non-adjacent form ($w$-NAF),
  where each block of $w$ consecutive digits contains at most one non-zero
  digit. We prove that for sufficiently large $w$ and an expanding
  endomorphism, there is a suitable digit set  such that
  each lattice element has an expansion as a $w$-NAF. 

  If the eigenvalues of the endomorphism are large enough and $w$ is
  sufficiently large, then the $w$-NAF is shown to minimise the weight among
  all possible expansions of the same lattice element using the same digit
  system.
\end{abstract}


\maketitle


\section{Introduction}
\label{sec:intro}

One main operation in hyperelliptic curve cryptography is the computation of
multiples of a point on a hyperelliptic curve over a finite field. Clearly, we
want to perform that scalar multiplication as efficiently as possible. A
standard method are double-and-add algorithms. But if the hyperelliptic curve
is defined over a field with $q$ elements and we are working in the point group
over an extension (i.e., working over a field with $q^m$ elements), then we can
use a Frobenius-and-add method instead. There the (expensive) doublings are
replaced by the (cheap) evaluations of the $q$-Frobenius endomorphism in the
point group.

In the endomorphism ring of the point group, the Frobenius endomorphism
$\varphi$ acting on the group has a characteristic polynomial $f \in
\Z[X]$. Let $\tau$ be a complex zero of $f$. If we write a $z\in\Z[\tau]$ as
$z=\sum_{j=0}^{\ell-1} \eta_j\tau^j$ for some $\eta_j$ out of a digit set
$\calD$, then we can calculate $zP$ for a point $P$ on the curve by evaluating
$\sum_{j=0}^{\ell-1} \eta_j\varphi^j(P)$. Note that when $z$ is a rational
integer, we are calculating multiples of the point $P$ as mentioned at the
beginning of this section. Therefore we have to understand numeral systems with
an algebraic integer $\tau$ as base.

The sums in the previous paragraph are usually evaluated by a Horner
scheme. There the number of additions when calculating $zP$ corresponds to the
number of non-zero digits (Hamming weight) of our expansion of $z$. Therefore
we are interested in expansions of small weight.  Let $w$ be a positive
integer. An expansion which gives a low Hamming weight is the $w$-non-adjacent
form, $\calD$-\wNAF{} for short, cf.\ \cite{Muir-Stinson:2005:alter-digit-long,
  avanzi:mywnaf, Solinas:2000:effic-koblit}. It is defined by the syntactic
requirement that every block of $w$ consecutive digits contains at most one
non-zero digit.  Suitable conditions on $\calD$ are required such that it is a
$w$-non-adjacent digit set (\wNADS{} for short), which means that each element
of $\Z[\tau]$ has a representation as a $\calD$-\wNAF{}.

In the present paper we give positive results on that existence question. Our
set-up is more general: In Section~\ref{sec:basics}, which contains the
definitions and some basic results, we work in an Abelian group and the base is
represented by an injective endomorphism on that group. In the remaining
article, starting with Section~\ref{sec:lattices}, the set-up is a lattice
$\Lambda$ in $\R^n$ and an injective endomorphism on $\Lambda$ as base.  The
case of algebraic integer bases is a special case of this set-up, cf.\
Examples~\ref{example:number-field-case}
and~\ref{example:number-field-case-and-lattice}.

In Section~\ref{sec:lattices} we prove a necessary condition to be a \wNADS{},
namely that the endomorphism has to be expanding. Section~\ref{sec:tilings}
deals with the setting when the digit set comes from a tiling of
$\R^n$. Theorem~\ref{thm:tiling-wNADS} states that we have a \wNADS{} if $w$ is
sufficiently large. The bound in that result is explicit. Another result of
that kind is given in Section~\ref{sec:min-norm}, generalising a result of
Germ\'an and Kov\'acs \cite{German-Kovacs:2007:number-system-const} to
$\calD$-\wNAF{}s. There minimal norm digit sets are studied. Again we get a
\wNADS{} if $w$ is larger than a constant, which depends (only) on the
eigenvalues of $\Phi$, cf.\ Theorem~\ref{theorem:norm-2}. As an important
example, we discuss the setting of bases $\tau$ coming from hyperelliptic
curves, see above, in Example~\ref{example:alg-curves}.

The last section is devoted to the question of minimality: Are the
$\calD$-\wNAF{}-expansions optimal, i.e., does the $\calD$-\wNAF{}-expansion of
an element minimise the weight among all possible expansions of that element
with the same digit set?  We provide a positive answer for sufficiently large
$w$ and sufficiently large eigenvalues of $\Phi$ in
Theorem~\ref{thm:optimality}.


\section{$w$-Non-Adjacent Forms and Digit Sets}
\label{sec:basics}

In this section, we formally introduce the notion of $w$-non-adjacent forms and
$w$-non-adjacent digits sets.

We consider an Abelian group $\calA$, an injective endomorphism $\Phi$ of
$\calA$ and an integer $w\ge 1$. Let $\calD^\bullet$ be a system of
representatives of those residue classes of $\calA$ modulo $\Phi^w(\calA)$
which are not contained in $\Phi(\calA)$. We set
$\calD=\calD^\bullet\cup\{0\}$.

We call the triple $(\calA, \Phi, \calD)$ a \emph{pre-$w$-non-adjacent digit
  set} (\emph{pre-\wNADS}).

\begin{definition}
  \begin{enumerate}
  \item A word $\bfeta=\eta_{\ell-1}\ldots\eta_0$ over the alphabet $\calD$ is
    said to be a \emph{$\calD$-$w$-non-adjacent form} (\emph{$\calD$-\wNAF}),
    if every factor $\eta_{j+w-1}\ldots\eta_j$, $0\le j\le \ell-w$, contains at
    most one non-zero letter $\eta_k$. Its \emph{value} is defined to be
    \begin{equation*}
      \val(\eta_{\ell-1}\ldots\eta_0)=\sum_{j=0}^{\ell-1} \Phi^j(\eta_j).
    \end{equation*}
    We say that $\bfeta$ is a \emph{$\calD$-\wNAF{} of $\alpha\in\calA$} if
    $\val(\bfeta)=\alpha$.
  \item We say that $\calD$ is a \emph{$w$-non-adjacent digit set}
    (\emph{\wNADS}), if every $\alpha\in\calA$ admits a $\calD$-\wNAF{}.
  \end{enumerate}
\end{definition}

\begin{example}\label{example:number-field-case}
  Let $K$ be a number field of degree $n$, $\order$ be an order in $K$ and
  $\tau\in\order$.  We consider the endomorphism
  $\Phi_{\tau}\colon\order\to\order$ with $\alpha\mapsto \tau\alpha$, i.e.,
  multiplication by $\tau$. Then let $\calD^\bullet$ be a system of
  representatives of those residue classes of $\order$ modulo $\tau^w$ which
  are not divisible by $\tau$ and $\calD=\calD^\bullet\cup \{0\}$. Then
  $(\order, \Phi_\tau,\calD)$ is a pre-\wNADS. Note that
  \begin{equation*}
    \val(\eta_{\ell-1}\ldots\eta_0)=\sum_{j=0}^{\ell-1}\eta_j\tau^j
  \end{equation*}
  for a word $\eta_{\ell-1}\ldots\eta_0$ over the alphabet $\calD$.
\end{example}

We state a few special cases.

\begin{example}\label{example:rational-base}
  Let $\tau\in \Z$, $|\tau|\ge 2$ and $w\ge 1$ be an integer. Consider
  \begin{equation*}
    \calD^\bullet=\left\{ d\in \Z: -\frac{|\tau|^w}{2}<d\le\frac{|\tau|^w}{2},
    \tau\nmid d\right\}
  \end{equation*}
  and $\calD=\calD^\bullet\cup\{0\}$. Then $(\Z, \Phi_\tau, \calD)$ is a
  pre-\wNADS, where $\Phi_\tau$ still denotes multiplication by $\tau$. It can
  be shown that $(\Z,\Phi_\tau,\calD)$ is a \wNADS. This will also be a
  consequence of Theorem~\ref{theorem:norm-2}.
\end{example}

\begin{example}\label{example:imaginary-quadratic}
  Let $\tau$ be an imaginary quadratic integer and $\calD^\bullet$ a system of
  representatives of those residue classes of $\Z[\tau]$ modulo $\tau^w$ which
  are not divisible by $\tau$ with the property that
  \begin{equation*}
    \text{ if $\alpha\equiv\beta\tpmod{\tau^w}$ and $\alpha\in\calD^\bullet$,
      then $|\alpha|\leq|\beta|$}
  \end{equation*}
  holds for $\alpha$, $\beta\in\Z[\tau]$ which are not divisible by
  $\tau$. This means that $\calD$ contains a representative of minimal absolute
  value of each residue class not divisible by $\tau$. As always, we set
  $\calD=\calD^\bullet\cup\{0\}$.

  Then, for $w\ge 2$, $(\Z[\tau],\Phi_\tau, \calD)$ is a \wNADS{} (cf.\
  Heuberger and Krenn~\cite{Heuberger-Krenn:2010:wnaf-analysis-short}), where
  $\Phi_\tau$ still denotes multiplication by $\tau$.

  For $\tau\in\{(\pm 1\pm \sqrt{-7})/2, (\pm 3\pm\sqrt{-3})/2, 1+\sqrt{-1},
  \sqrt{-2}, (1+\sqrt{-11})/2 \}$, this has been shown by
  Solinas~\cite{Solinas:1997:improved-algorithm,Solinas:2000:effic-koblit} and
  Blake, Murty and Xu~\cite{Blake-Kumar-Xu:2005:effic-algor,
    Blake-Murty-Xu:ta:nonad-radix}, cf.\ also Blake, Murty and
  Xu~\cite{Blake-Murty-Xu:2005:naf} for other digit sets to the bases $(\pm
  1\pm \sqrt{-7})/2$.
\end{example}

At several occurrences, it is useful to consider \emph{equivalent} pre-\wNADS.

\begin{definition}
  The pre-\wNADS{} $(\calA, \Phi, \calD)$ and $(\calA', \Phi', \calD')$ are
  said to be \emph{equivalent}, if there is a group isomorphism
  $Q\colon\calA\to\calA'$ such that the diagram
  \begin{equation*}
    \xymatrix{
      \calA \ar[d]_Q \ar[r]_\Phi& \calA \ar[d]_Q \\
      {\calA'} \ar[r]_{\Phi'}& {\calA'}
    }
  \end{equation*}
  commutes and such that $\calD'=Q(\calD)$.
\end{definition}

It is then clear that the following proposition holds.

\begin{proposition}
  Let $(\calA, \Phi, \calD)$ and $(\calA', \Phi', \calD')$ two equivalent
  pre-\wNADS. Then $\calD$ is a \wNADS{} if and only if $\calD'$ is a \wNADS.
\end{proposition}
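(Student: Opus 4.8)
The plan is to show that the group isomorphism $Q$ witnessing the equivalence transports $\calD$-\wNAF{}s to $\calD'$-\wNAF{}s while respecting values, so that the two existence statements coincide. First I would fix a group isomorphism $Q\colon\calA\to\calA'$ with $\Phi'\circ Q=Q\circ\Phi$ and $\calD'=Q(\calD)$, and note that $Q(0)=0$ because $Q$ is a group homomorphism; hence $Q$ restricts to a bijection $\calD\to\calD'$ sending $0$ to $0$. Applying this bijection letterwise yields a bijection, again denoted $Q$, from words over $\calD$ to words over $\calD'$, mapping $\eta_{\ell-1}\ldots\eta_0$ to $Q(\eta_{\ell-1})\ldots Q(\eta_0)$. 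Since $Q$ is injective and $Q(0)=0$, a letter $\eta_k$ vanishes if and only if $Q(\eta_k)$ vanishes; the positions of the non-zero letters are therefore unchanged, so $\eta_{\ell-1}\ldots\eta_0$ is a $\calD$-\wNAF{} if and only if $Q(\eta_{\ell-1})\ldots Q(\eta_0)$ is a $\calD'$-\wNAF{}.

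Next I would check compatibility with the value map. From $\Phi'\circ Q=Q\circ\Phi$ an immediate induction gives $(\Phi')^j\circ Q=Q\circ\Phi^j$ for all $j\ge 0$, and then additivity of $Q$ yields
\[
  \val\bigl(Q(\eta_{\ell-1})\ldots Q(\eta_0)\bigr)
  =\sum_{j=0}^{\ell-1}(\Phi')^j\bigl(Q(\eta_j)\bigr)
  =Q\Bigl(\sum_{j=0}^{\ell-1}\Phi^j(\eta_j)\Bigr)
  =Q\bigl(\val(\eta_{\ell-1}\ldots\eta_0)\bigr).
\]
Thus $\bfeta$ is a $\calD$-\wNAF{} of $\alpha\in\calA$ if and only if $Q(\bfeta)$ is a $\calD'$-\wNAF{} of $Q(\alpha)\in\calA'$.

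Finally I would conclude. If $\calD$ is a \wNADS{} and $\alpha'\in\calA'$ is arbitrary, then surjectivity of $Q$ gives $\alpha'=Q(\alpha)$ for some $\alpha\in\calA$; choosing a $\calD$-\wNAF{} $\bfeta$ of $\alpha$, the word $Q(\bfeta)$ is a $\calD'$-\wNAF{} of $\alpha'$ by the two observations above, so $\calD'$ is a \wNADS{}. For the converse it suffices to observe that $Q^{-1}$ witnesses that $(\calA',\Phi',\calD')$ and $(\calA,\Phi,\calD)$ are equivalent, so the same argument applies with the two pre-\wNADS{} interchanged. There is essentially no obstacle here: the only points requiring (minimal) care are that $Q$ genuinely fixes the digit $0$ and that the $w$-non-adjacency condition depends only on the support of a word, which the letterwise bijection preserves.
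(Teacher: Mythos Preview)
Your argument is correct and is exactly the straightforward verification the paper has in mind; the paper's own proof consists of the single word ``Straightforward.'' There is nothing to add.
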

\begin{proof}
  Straightforward.
\end{proof}

\begin{example}\label{example:number-field-case-and-lattice}
  We continue Example~\ref{example:number-field-case}, i.e., $K$ is a number
  field, $\order$ an order in $K$, $\tau\in\order$, the endomorphism
  considered is $\Phi_\tau$, the multiplication by $\tau$, and the digit set
  $\calD$ is as in Example~\ref{example:number-field-case}.

  The real embeddings of $K$ are denoted by $\sigma_1$, \ldots, $\sigma_s$; the
  non-real complex embeddings of $K$ are denoted by $\sigma_{s+1}$,
  $\overline{\sigma_{s+1}}$, \ldots, $\sigma_{s+t}$, $\overline{\sigma_{s+t}}$,
  where $\overline{\,\cdot\,}$ denotes complex conjugation and $n=s+2t$. The
  \emph{Minkowski map} $\Sigma\colon K\to \R^n$ maps $\alpha\in K$ to
  \begin{equation*}
    \left(\sigma_1(\alpha),\ldots,\sigma_{s}(\alpha), \Re \sigma_{s+1}(\alpha),
    \Im \sigma_{s+1}(\alpha), \ldots, \Re \sigma_{s+t}(\alpha),
    \Im \sigma_{s+t}(\alpha)\right)\in\R^n.
  \end{equation*}
  We write $\Lambda=\Sigma(\order)$ for the image of $\order$ under
  $\Sigma$. Note that $\Lambda$ is a lattice in $\R^n$. We consider the
  $n\times n$ block diagonal matrix
  \begin{equation*}
    A_\tau:=\diag\left(\sigma_1(\tau), \ldots, \sigma_s(\tau), 
      \begin{pmatrix}
        \Re \sigma_{s+1}(\tau)&-\Im\sigma_{s+1}(\tau)\\
        \Im\sigma_{s+1}(\tau)&\Re \sigma_{s+1}(\tau)
      \end{pmatrix},
      \ldots,
      \begin{pmatrix}
        \Re \sigma_{s+t}(\tau)&-\Im\sigma_{s+t}(\tau)\\
        \Im\sigma_{s+t}(\tau)&\Re \sigma_{s+t}(\tau)
      \end{pmatrix}\right)
  \end{equation*}
  and set $\calD':=\Sigma(\calD)$. Then the pre-\wNADS{}
  $(\order,\Phi_\tau,\calD)$ and $(\Lambda, \Phi'_\tau, \calD')$ are easily
  seen to be equivalent, where $\Phi'_\tau(x):=A_\tau\cdot x$ for $x\in\R^n$.

  Note that if $K$ is an imaginary quadratic number field
  (cf.\ Example~\ref{example:imaginary-quadratic}), this construction merely
  corresponds to a straight-forward identification of $\C$ with $\R^2$.
\end{example}

In order to investigate the \wNADS{} property further, it is convenient to
consider the following two maps.

\begin{definition}
  Let $(\calA,\Phi,\calD)$ be a pre-\wNADS. We define
  \begin{enumerate}
  \item $d\colon\calA\to \calD$ with $d(\alpha)=0$ for $\alpha\in\Phi(\calA)$
    and $d(\alpha)\equiv \alpha\pmod{\Phi^w(\calA)}$ for all other
    $\alpha\in\calA$,
  \item  $T\colon\calA\to \calA$ with $\alpha\mapsto\Phi^{-1}
(\alpha-d(\alpha))$.
  \end{enumerate}
\end{definition}

Note that the map $d$ is well-defined as $\calD^\bullet$ contains exactly one
representative of every residue class of $\calA$ modulo $\Phi^w(\calA)$ which
is not contained in $\Phi(\calA)$.  Furthermore, we have $\alpha\equiv
d(\alpha)\pmod{\Phi(\calA)}$ for all $\alpha\in\calA$. Therefore and by the
injectivity of $\Phi$, the map $T$ is well-defined.  We remark that by
definition, we have $T(0)=0$.

We get the following characterisation, which corresponds to the backwards
division algorithm for computing digital expansions from right (least
significant digit) to left (most significant digit).

\begin{lemma}\label{lemma:w-NAF-backwards-division}
  Let $\alpha\in\calA$. Then $\alpha$ has a $\calD$-\wNAF{}
  $\eta_{\ell-1}\ldots \eta_0$ if and only if $T^\ell(\alpha)=0$. In this case,
  we have $\eta_k=d(T^k(\alpha))$ for $0\le k<\ell$. In particular, the
  $\calD$-\wNAF{} of an $\alpha\in\calA$, if it exists, is unique up to leading
  zeros.
\end{lemma}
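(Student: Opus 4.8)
The plan is to prove both directions of the equivalence together with the digit formula by induction on $\ell$, using the recursive structure of the maps $d$ and $T$. The key algebraic identity is that for any $\alpha\in\calA$ we have $\alpha = d(\alpha) + \Phi(T(\alpha))$, which is immediate from the definition $T(\alpha)=\Phi^{-1}(\alpha - d(\alpha))$ together with the injectivity of $\Phi$ (so that $\Phi$ composed with $\Phi^{-1}$ on the relevant element is the identity). Iterating this identity $\ell$ times yields
\begin{equation*}
  \alpha = \sum_{j=0}^{\ell-1}\Phi^j\bigl(d(T^j(\alpha))\bigr) + \Phi^\ell(T^\ell(\alpha)).
\end{equation*}

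First I would establish the \textbf{``if'' direction}: assume $T^\ell(\alpha)=0$ and set $\eta_k := d(T^k(\alpha))$ for $0\le k<\ell$. The displayed identity above then gives $\alpha = \sum_{j=0}^{\ell-1}\Phi^j(\eta_j) = \val(\eta_{\ell-1}\ldots\eta_0)$, so $\bfeta$ has the right value. It remains to check the syntactic $w$-NAF condition, i.e.\ that no window of $w$ consecutive digits contains two non-zero entries. For this I would use the following fact about $d$: if $\alpha\notin\Phi(\calA)$ then $d(\alpha)\equiv\alpha\pmod{\Phi^w(\calA)}$, hence $T(\alpha) = \Phi^{-1}(\alpha - d(\alpha)) \in \Phi^{w-1}(\calA)$, and consequently $T(\alpha), T^2(\alpha),\ldots,T^{w-1}(\alpha)$ all lie in $\Phi(\calA)$ (since each successive application of $T$ to an element of $\Phi^{i}(\calA)$ with $i\ge 2$ yields an element of $\Phi^{i-1}(\calA)$, because $d$ vanishes on $\Phi(\calA)$ there). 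Therefore $d(T^k(\alpha))=0$ for those $k$, so whenever $\eta_j = d(T^j(\alpha))\ne 0$ we get $\eta_{j+1}=\cdots=\eta_{j+w-1}=0$; this is exactly the non-adjacency condition.

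Next the \textbf{``only if'' direction}: suppose $\bfeta = \eta_{\ell-1}\ldots\eta_0$ is a $\calD$-\wNAF{} of $\alpha$. I would argue by induction on $\ell$, showing simultaneously that $d(\alpha)=\eta_0$ and $T(\alpha) = \val(\eta_{\ell-1}\ldots\eta_1)$, with the latter again being a $\calD$-\wNAF{} (of length $\ell-1$). For the base digit: from $\val(\bfeta)=\alpha$ we have $\alpha = \eta_0 + \Phi\bigl(\sum_{j\ge 1}\Phi^{j-1}(\eta_j)\bigr)$, so $\alpha\equiv\eta_0\pmod{\Phi(\calA)}$. There are two cases. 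If $\eta_0 = 0$, then $\alpha\in\Phi(\calA)$ and $d(\alpha)=0=\eta_0$ by definition. If $\eta_0\ne 0$, the non-adjacency condition forces $\eta_1=\cdots=\eta_{w-1}=0$, so $\alpha = \eta_0 + \Phi^w\bigl(\sum_{j\ge w}\Phi^{j-w}(\eta_j)\bigr)\equiv\eta_0\pmod{\Phi^w(\calA)}$; moreover $\eta_0\in\calD^\bullet$ is not in $\Phi(\calA)$, hence neither is $\alpha$, so by the defining property of $d$ (it picks out the unique $\calD^\bullet$-representative of the class mod $\Phi^w(\calA)$) we again get $d(\alpha)=\eta_0$. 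In both cases $T(\alpha) = \Phi^{-1}(\alpha-\eta_0) = \val(\eta_{\ell-1}\ldots\eta_1)$, which is a shorter $\calD$-\wNAF{}; applying the induction hypothesis gives $T^{\ell-1}(T(\alpha))=0$, i.e.\ $T^\ell(\alpha)=0$, and the digit formula $\eta_k = d(T^k(\alpha))$ for all $k$. Uniqueness up to leading zeros is then an immediate corollary, since the digits are forced to be $d(T^k(\alpha))$ and the number of significant digits is the smallest $\ell$ with $T^\ell(\alpha)=0$.

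The main obstacle, and the step deserving the most care, is the bookkeeping in the ``if'' direction showing that a nonzero digit $\eta_j$ is followed by $w-1$ zeros: one must track how the membership $T^k(\alpha)\in\Phi^i(\calA)$ degrades under each application of $T$ and verify that $d$ genuinely vanishes at each of the intermediate steps, which rests on the precise definition of $d$ (zero on $\Phi(\calA)$) and on the injectivity of $\Phi$ to make $\Phi^{-1}$ meaningful on the relevant subgroups. The rest is routine manipulation of the identity $\alpha = d(\alpha)+\Phi(T(\alpha))$.
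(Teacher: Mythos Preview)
Your proposal is correct and follows essentially the same approach as the paper's proof: both directions hinge on the identity $\alpha = d(\alpha) + \Phi(T(\alpha))$, the ``only if'' part is handled by the two-case analysis of $\eta_0$ (zero versus nonzero, the latter invoking the \wNAF{} condition to get $\alpha\equiv\eta_0\pmod{\Phi^w(\calA)}$), and the ``if'' part verifies the syntactic \wNAF{} property by showing that $d(\beta)\neq 0$ forces $d(T^j(\beta))=0$ for $1\le j\le w-1$. The only differences are presentational: you frame the ``only if'' direction as a formal induction on $\ell$ and track the degradation $T^k(\alpha)\in\Phi^{w-k}(\calA)$ more explicitly, whereas the paper simply writes $T^j(\beta)=\Phi^{-j}(\beta-d(\beta))$ directly and iterates.
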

\begin{proof}
  Assume that $\eta_{\ell-1}\ldots\eta_0$ is a $\calD$-\wNAF{} of $\alpha$. We
  clearly have $\alpha\equiv \eta_0\pmod{\Phi(\Lambda)}$, so that $\alpha$ is
  an element of $\Phi(\Lambda)$ if and only if $\eta_0=0$. Otherwise, the
  \wNAF{}-condition ensures that $\alpha\equiv
  \eta_0\pmod{\Phi^w(\Lambda)}$. In both cases, we get $d(\alpha)=\eta_0$ and
  therefore
  \begin{equation*}
    T(\alpha)=\val(\eta_{\ell-1}\ldots \eta_1).
  \end{equation*}

  Iterating this process yields $T^k(\alpha)=\val(\eta_{\ell-1}\ldots \eta_k)$
  for $0\le k\le \ell$, where $\eta_{\ell-1}\ldots \eta_k$ is a
  $\calD$-\wNAF. For $k=\ell$, we see that $T^\ell(\alpha)$ is the value of the
  empty word, which is zero by the definition of the empty sum.

  Conversely, we assume that $T^\ell(\alpha)=0$. We note that if $d(\beta)\neq
  0$ for some $\beta\in\calA$, we have $\beta-d(\beta)\equiv
  0\pmod{\Phi^w(\Lambda)}$, which results in
  $T^j(\beta)=\Phi^{-j}(\beta-d(\beta))\equiv 0\pmod{\Phi(\Lambda)}$ and
  $d(T^j(\beta))=0$ for $1\le j\le w-1$. Therefore, the word
  $\bfeta=d(T^{\ell-1}(\alpha))\ldots d(T(\alpha))d(\alpha)$ is a
  $\calD$-\wNAF. Iterating the relation $\beta=\Phi(T(\beta))+d(\beta)$ valid
  for all $\beta\in\calA$, we conclude that
  $\alpha=\Phi^\ell(T^\ell(\alpha))+\val(\bfeta)=\val(\bfeta)$.
\end{proof}

\section{Lattices and $\calD$-\wNAF{}s}
\label{sec:lattices}

We now specialise our investigations to the case that the abstract Abelian
group $\calA$ is replaced by a lattice in $\R^n$, i.e.,
$\calA=\Lambda=w_1\Z\oplus\cdots\oplus w_n\Z$ for linearly independent $w_1$,
\ldots, $w_n\in\R^n$. Further let $\Phi$ be an injective endomorphism of $\R^n$
with $\Phi(\Lambda)\subseteq \Lambda$, $w\geq1$ be an integer, and
$\calD^\bullet$ a system of representatives of those residue classes of
$\Lambda$ modulo $\Phi^w(\Lambda)$ which are not contained in $\Phi(\Lambda)$,
and set $\calD=\calD^\bullet\cup\{0\}$.

The results are still applicable to the case of multiplication by $\tau$ in the
order of a number field, as the purpose of
Example~\ref{example:number-field-case-and-lattice} was to describe it as
equivalent to a lattice $\Lambda\subseteq \R^n$ via the isomorphism $\Sigma$.

The aim of this section is to prove a necessary criterion for a pre-\wNADS{} to
be a \wNADS.

\begin{proposition}\label{proposition:necessary-criterion}
  Let $\calD$ be a \wNADS. Then $\Phi$ is expanding, i.e., $|\lambda|>1$ holds
  for all eigenvalues~$\lambda$ of $\Phi$.
\end{proposition}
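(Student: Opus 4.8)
The plan is to argue by contradiction: suppose $\Phi$ has an eigenvalue $\lambda$ with $|\lambda|\le 1$, and use this to exhibit a lattice element whose $\calD$-\wNAF{} does not exist, contradicting the \wNADS{} hypothesis. By Lemma~\ref{lemma:w-NAF-backwards-division}, having a $\calD$-\wNAF{} is equivalent to $T^\ell(\alpha)=0$ for some $\ell$, so it suffices to find an $\alpha\in\Lambda$ whose forward orbit $\alpha, T(\alpha), T^2(\alpha),\ldots$ never hits $0$. The natural way to do this is a size argument: I would show that the digits in $\calD$ are uniformly bounded (this is where the lattice structure enters — $\calD$ is a finite set, being a system of representatives modulo the finite-index sublattice $\Phi^w(\Lambda)$), and that $T$ therefore cannot shrink ``large'' elements below a certain threshold when $\Phi$ fails to be expanding.

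The key steps, in order, are as follows. First, fix the eigenvalue $\lambda$ with $|\lambda|\le 1$ and pass to a norm on $\R^n$ adapted to $\Phi$. Since $\calD$ is finite, set $C:=\max_{d\in\calD}\|d\|$. The recursion $\alpha=\Phi(T(\alpha))+d(\alpha)$ gives $T(\alpha)=\Phi^{-1}(\alpha-d(\alpha))$, and I want a lower bound $\|T(\alpha)\|\ge \|\alpha\|-C'$ or similar for a suitable norm, coming from the fact that $\Phi^{-1}$ has an eigenvalue $\lambda^{-1}$ with $|\lambda^{-1}|\ge 1$. Concretely, I would work in the generalized eigenspace (or, to avoid Jordan-block complications, use a vector $v$ in the actual eigenspace of the transpose $\Phi^{*}$ for an eigenvalue $\mu$ with $|\mu|\le 1$, and track the linear functional $\langle v,\cdot\rangle$): then $\langle v, T(\alpha)\rangle = \mu^{-1}(\langle v,\alpha\rangle - \langle v,d(\alpha)\rangle)$, so $|\langle v,T(\alpha)\rangle|\ge |\langle v,\alpha\rangle| - |\langle v, d(\alpha)\rangle| \ge |\langle v,\alpha\rangle| - M$ where $M:=\max_{d\in\calD}|\langle v,d\rangle|$. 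Hence if $|\langle v,\alpha\rangle| > M$, then $|\langle v, T^k(\alpha)\rangle| \ge |\langle v,\alpha\rangle| - kM \cdot 0$... — more precisely the bound telescopes to keep the functional's absolute value bounded below, so the orbit stays away from $0$. Second, I need to exhibit $\alpha\in\Lambda$ with $|\langle v,\alpha\rangle|$ arbitrarily large: since $\Lambda$ is a full-rank lattice and $v\ne 0$, the values $\langle v,\alpha\rangle$ for $\alpha\in\Lambda$ form a nontrivial subgroup of $\R$, hence are unbounded, so such $\alpha$ exists. Third, conclude that $T^k(\alpha)\ne 0$ for all $k$, so $\alpha$ has no $\calD$-\wNAF{}, contradicting that $\calD$ is a \wNADS.

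The main obstacle is making the ``the orbit stays away from $0$'' step fully rigorous when $|\mu|=1$ exactly (the borderline case), since then $|\mu^{-1}|=1$ and the functional is merely translated by a bounded amount each step rather than genuinely expanded — one must check the translations cannot conspire to cancel. The clean fix is to choose the starting functional value large enough relative to $M$ that even in the worst case $|\langle v, T^k(\alpha)\rangle|$ cannot reach $0$; since $|\langle v, T^{k+1}(\alpha)\rangle| \ge |\langle v, T^k(\alpha)\rangle| - M$ when $|\mu|^{-1}\ge 1$, once $|\langle v,T^k(\alpha)\rangle| > M$ we get $|\langle v,T^{k+1}(\alpha)\rangle|>0$, and to keep the invariant $|\langle v,T^k(\alpha)\rangle|>M$ propagating one needs a little more care — it is cleanest to invoke $|\mu|^{-1}\ge 1$ to get $|\langle v,T^{k+1}(\alpha)\rangle|\ge |\mu|^{-1}|\langle v,T^k(\alpha)\rangle| - M \ge |\langle v,T^k(\alpha)\rangle| - M$, and then note that if ever the quantity dips to $[0,M]$ we already have a contradiction-free situation only if it is exactly $0$; but starting strictly above $M$ and with the orbit living in the discrete set $\langle v,\Lambda\rangle$, a short argument (the set $\langle v,\Lambda\rangle\cap[-M,M]$ is finite, resp. one can rescale $v$ so that $\langle v,\Lambda\rangle\subseteq\Z$) closes the gap. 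If the generalized eigenspace for $\mu$ is not diagonalizable, the transpose-eigenvector $v$ still exists as an honest eigenvector of $\Phi^*$, so Jordan blocks cause no trouble for this functional-based argument.
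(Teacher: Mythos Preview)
Your functional argument is correct for the strict case $|\mu|<1$: with $v$ an eigenvector of $\Phi^*$ for $\mu$, the recursion gives $|\langle v,T(\alpha)\rangle|\ge|\mu|^{-1}\bigl(|\langle v,\alpha\rangle|-M\bigr)$, and since $|\mu|^{-1}>1$ the quantity strictly increases once $|\langle v,\alpha\rangle|>M/(1-|\mu|)$; such $\alpha$ exist because $\langle v,\Lambda\rangle$ is unbounded. This is a dual reformulation of the paper's Case~1, which instead argues directly that the projection of $\val(\bfeta)$ onto the generalised $\lambda$-eigenspace is bounded independently of $\bfeta$, so not every lattice element can be a value.

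The genuine gap is the boundary case $|\mu|=1$. There your inequality degenerates to $|\langle v,T^{k+1}(\alpha)\rangle|\ge|\langle v,T^k(\alpha)\rangle|-M$, which allows the quantity to drift down by $M$ at every step; nothing prevents it from reaching~$0$ after enough iterations. Your proposed discreteness fix does not work: the set $\langle v,\Lambda\rangle$ is \emph{not} discrete in general. In coordinates with $\Lambda=\Z^n$ the matrix of $\Phi$ is integral, so $\mu$ is an algebraic integer and the entries of $v$ lie in $\Q(\mu)$; when $\mu\notin\Q$ (for instance a primitive root of unity of order $>2$) the $\Z$-span of these entries is typically dense in $\R$ or in $\C$, and no rescaling of $v$ can force $\langle v,\Lambda\rangle\subseteq\Z$. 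Even if it were discrete, finiteness of $\langle v,\Lambda\rangle\cap[-M,M]$ would not by itself stop the orbit from entering that window and then hitting~$0$.

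The paper closes this case by an entirely different, algebraic argument that your size-based approach does not capture. Assuming all eigenvalues satisfy $|\lambda|\ge1$ with $|\lambda_0|=1$, one uses that the eigenvalues are algebraic integers (roots of the integral characteristic polynomial) and that $\overline{\lambda_0}=\lambda_0^{-1}$ is one of them, so $\lambda_0$ is a unit; since all conjugates of $\lambda_0$ are themselves eigenvalues and hence have absolute value at least~$1$, Kronecker's theorem forces $\lambda_0$ to be a root of unity. Thus $\Phi^\ell$ has eigenvalue~$1$ for some $\ell$, and a lattice eigenvector $\alpha=\Phi^\ell(\alpha)$ satisfies $\alpha\in\Phi^k(\Lambda)$ for every $k$, whence $d(T^k(\alpha))=0$ and $T^k(\alpha)=\Phi^{-k}(\alpha)\neq0$ for all $k$. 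This Kronecker step is the missing ingredient; without it the $|\mu|=1$ case cannot be settled by growth estimates alone.
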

\begin{proof}
  \begin{enumerate}
  \item We first consider the case that there is an eigenvalue $\lambda$ of
    $\Phi$ with  $|\lambda|<1$. 

    In a somewhat different wording, this has been led to a contradiction by
    Vince~\cite{Vince:1993:replic}. The idea is the following: After a suitable
    change of variables, the endomorphism $\Phi$ can be represented by a Jordan
    matrix such that the first $k$ coordinates, say, correspond to the
    eigenvalue $\lambda$. Thus the first $k$ coefficients of $\val(\bfeta)$ are
    bounded independently of the word $\bfeta$ over the alphabet $\calD$. Thus
    it is impossible to have a representation of all elements of $\Lambda$.
    This is completely independent of the \wNAF-condition (and gives, in fact,
    a stronger result, as representability by any word over the digit set is
    impossible).

  \item We next consider the case that $|\lambda|\ge 1$ for all eigenvalues
    $\lambda$ of $\Phi$ with equality $|\lambda_0|=1$ for at least one
    eigenvalue $\lambda_0$.

    We again follow Vince~\cite{Vince:1993:replic}, see also Kov\'acs and
    Peth\H{o}~\cite{Kovacs-Petho:1991}, to see that $\lambda_0$ must be a root
    of unity. The idea is that $\lambda_0$ is a unit in
    $\Z[\lambda_0,\overline{\lambda_0}]$, as $\overline{\lambda_0}$ is its
    inverse. Therefore, $\lambda$ has absolute norm $\pm 1$. As we already
    assumed that all its absolute conjugates are at least $1$ in absolute
    value, this implies that all absolute conjugates of $\lambda_0$ 
    lie on the unit circle. Thus $\lambda_0$ is a root of unity. 

    As a consequence, there is some $\ell$ such that $\lambda_0^\ell=1$. In
    other words, $1$ is an eigenvalue of $\Phi^\ell$. After a suitable change
    of coordinates, $\Lambda$ can be assumed to be $\Z^n$ and $\Phi$ can be
    represented by a matrix with integer entries. Let $\alpha$ be an
    eigenvector of $\Phi^\ell$ with eigenvalue~$1$. Multiplying $\alpha$ by a
    suitable integer if necessary,  we can assume that
    $\alpha\in\Z^n=\Lambda$. As $\alpha=\Phi^\ell(\alpha)$, we get
    $\alpha\in\Phi^k(\Lambda)$ for all integers $k\ge 0$, which implies that
    $d(T^k(\alpha))=0$ holds for all $k$. Furthermore, we cannot have
    $T^k(\alpha)=0$ for any $k\ge 0$. Thus, $\alpha$ cannot be represented.
    \qedhere
  \end{enumerate}
\end{proof}

\section{Tiling Based Digit Sets}
\label{sec:tilings}

In this section, we consider a fixed lattice $\Lambda\subseteq \R^n$ and an
expanding endomorphism $\Phi$ of $\R^n$ with
$\Phi(\Lambda)\subseteq\Lambda$. We will discuss digit sets constructed from
tilings.

\begin{definition}
  Let $V$ be a subset of $\R^n$. We say that $V$ tiles $\R^n$ by the lattice
  $\Lambda$, if the following two properties hold:
  \begin{enumerate}
  \item $\bigcup_{z\in \Lambda}(z+V)=\R^n$,
  \item $V\cap (z+V)\subseteq \partial V$ holds for all $z\in \Lambda$ with
    $z\neq 0$.
  \end{enumerate}
\end{definition}

We now assume that $V$ be a subset of $\R^n$ tiling $\R^n$ by $\Lambda$. 

\begin{lemma}\label{lemma:tiling-residue-system}Let $w\ge 1$ and
  \begin{equation*}
    \widetilde{\calD}:=\{ \alpha\in \Lambda: \Phi^{-w}(\alpha)\in V \}.
  \end{equation*}
  Then $\widetilde{\calD}$ contains a complete residue system of $\Lambda$
  modulo $\Phi^w(\Lambda)$.

  Furthermore, if $\alpha$, $\alpha'\in\widetilde{\calD}$ with $\alpha \neq
  \alpha'$ and $\alpha\equiv\alpha'\pmod{\Phi^w(\Lambda)}$, then
  $\Phi^{-w}(\alpha)$, $\Phi^{-w}(\alpha')\in\partial V$.
\end{lemma}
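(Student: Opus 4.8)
The plan is to exploit the tiling property directly. Recall that $V$ tiles $\R^n$ by $\Lambda$, so $\bigcup_{z\in\Lambda}(z+V)=\R^n$ and the translates overlap only on boundaries. First I would fix an arbitrary residue class of $\Lambda$ modulo $\Phi^w(\Lambda)$; pick a representative $\beta\in\Lambda$. Applying $\Phi^{-w}$ to $\beta$ gives a point $\Phi^{-w}(\beta)\in\R^n$ (here we use that $\Phi$ is expanding, hence invertible on $\R^n$). By the covering property of the tiling, there exists $z\in\Lambda$ with $\Phi^{-w}(\beta)\in z+V$, i.e.\ $\Phi^{-w}(\beta)-z\in V$. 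Now the key point is that $\Phi^w(\Lambda)\subseteq\Lambda$ implies $\Phi^{-w}$ maps $\Phi^w(\Lambda)$ into $\Lambda$, and more precisely $\Phi^{-w}(z)\in\Lambda$ is not quite what we want; instead I would observe that $\Phi^w(z)\in\Phi^w(\Lambda)$, so set $\alpha:=\beta-\Phi^w(z)$. Then $\alpha\equiv\beta\pmod{\Phi^w(\Lambda)}$ and $\Phi^{-w}(\alpha)=\Phi^{-w}(\beta)-z\in V$, so $\alpha\in\widetilde{\calD}$. This shows every residue class is represented in $\widetilde{\calD}$, which is the first assertion.

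For the second assertion, suppose $\alpha,\alpha'\in\widetilde{\calD}$ are distinct with $\alpha\equiv\alpha'\pmod{\Phi^w(\Lambda)}$, say $\alpha-\alpha'=\Phi^w(z)$ for some nonzero $z\in\Lambda$. Put $v:=\Phi^{-w}(\alpha)\in V$ and $v':=\Phi^{-w}(\alpha')\in V$; then $v-v'=\Phi^{-w}(\alpha-\alpha')=z\neq 0$, so $v=z+v'$. Hence $v\in V\cap(z+V)$ with $z\neq 0$, and the tiling condition forces $v\in\partial V$. Symmetrically $v'=v-z\in V\cap(-z+V)$, so $v'\in\partial V$ as well. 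Thus $\Phi^{-w}(\alpha),\Phi^{-w}(\alpha')\in\partial V$, as claimed.

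The argument is essentially bookkeeping, so there is no serious obstacle; the only point requiring a moment of care is making sure that $\Phi^{-w}$ genuinely maps $\Phi^w(\Lambda)$ back into $\Lambda$ (so that the adjustment $\alpha=\beta-\Phi^w(z)$ stays in $\Lambda$ and lands in the correct residue class), but this is immediate from $\Phi^w(z)\in\Phi^w(\Lambda)\subseteq\Lambda$ and $\beta\in\Lambda$. One should also note that the lemma only claims $\widetilde{\calD}$ \emph{contains} a complete residue system rather than \emph{being} one, precisely because boundary points of $V$ may yield two representatives of the same class; the second part quantifies exactly when this redundancy occurs, and both halves of the statement fall out of the single observation that membership of $\Phi^{-w}(\alpha)$ in $V$ translates residue-class questions on $\Lambda$ into covering/overlap questions for the tiling.
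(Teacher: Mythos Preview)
Your proof is correct and follows essentially the same approach as the paper: both use the covering property of the tiling to find a lattice translate $z$ (the paper calls it $\gamma$) with $\Phi^{-w}(\beta)-z\in V$ and then set $\alpha=\beta-\Phi^w(z)$, and both deduce the boundary statement directly from the overlap condition $V\cap(z+V)\subseteq\partial V$. The only differences are cosmetic (variable names and some extra commentary).
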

\begin{proof}
  Let $\beta\in\Lambda$. Then there is a $\gamma\in\Lambda$ and a $v\in V$ such
  that $\Phi^{-w}(\beta)=\gamma+v$. Setting $\alpha:=\beta-\Phi^w(\gamma)$,
  this implies that
  \begin{equation*}
    \Phi^{-w}(\alpha)=\Phi^{-w}(\beta)-\gamma=v\in V,
  \end{equation*}
  i.e., $\alpha\in\widetilde{\calD}$ and
  $\beta\equiv\alpha\pmod{\Phi^w(\Lambda)}$.

  Assume now $\alpha$, $\alpha'\in\widetilde{\calD}$ with $\alpha \neq \alpha'$
  and $\alpha\equiv\alpha'\pmod{\Phi^w(\Lambda)}$. We write
  $\alpha'=\alpha+\Phi^w(\gamma)$ for a suitable $\gamma\in\Lambda$. We obtain
  \begin{equation*}
    \Phi^{-w}(\alpha')=\Phi^{-w}(\alpha)+\gamma,
  \end{equation*}
  which implies that $\Phi^{-w}(\alpha')\in\partial V$. Analogously, we get
  $\Phi^{-w}(\alpha)\in\partial V$.
\end{proof}

For an integer $w\ge 1$, we choose a subset $\calD^\bullet$ of
$\widetilde{\calD}$ in such a way that $\calD^\bullet$ contains exactly one
representative of every residue class modulo $\Phi^w(\Lambda)$ which is not
contained in $\Phi(\Lambda)$. We also set $\calD:=\calD^\bullet\cup\{0\}$.

\begin{theorem}\label{thm:tiling-wNADS}
  Let $\|\,\cdot\,\|$ be a vector norm on $\R^n$ such that for the
  corresponding induced operator norm, also denoted by $\|\,\cdot\,\|$, the
  inequality $\|\Phi^{-1}\|<1$ holds. Let $r$ and $R$ be positive reals with
  \begin{equation}\label{eq:V-bounds}
    \{ x\in\R^n : \|x\|\le r\} \subseteq V \subseteq \{x\in\R^n: \|x\|\le R\}.
  \end{equation}
  If $w$ is a positive integer such that
  \begin{equation}\label{eq:tiling-endomorphism-bound}
    \|\Phi^{-1}\|^w< \frac1{1+R/r},
  \end{equation}
  then $\calD$ is a \wNADS.
\end{theorem}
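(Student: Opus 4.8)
The plan is to use the backwards division characterisation from Lemma~\ref{lemma:w-NAF-backwards-division}: it suffices to show that for every $\alpha\in\Lambda$ the orbit $\alpha, T(\alpha), T^2(\alpha),\dots$ eventually hits $0$. I would do this by a descent argument on the norm $\|\,\cdot\,\|$. First I would establish a uniform statement about the digits: by the construction of $\calD$ from $\widetilde{\calD}$, every nonzero digit $\delta\in\calD^\bullet$ satisfies $\Phi^{-w}(\delta)\in V\subseteq\{\|x\|\le R\}$, hence $\|\Phi^{-w}(\delta)\|\le R$, and also the ball of radius $r$ around $0$ is contained in a single fundamental domain so that digits are the ``closest'' representatives in an appropriate sense. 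The key quantitative input is that one application of $T$ followed by the appropriate number of further applications contracts the norm.

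The main computation I would carry out is the following estimate. Suppose $d(\alpha)=\delta\neq0$. Then by the \wNAF{}-bookkeeping already used in the proof of Lemma~\ref{lemma:w-NAF-backwards-division}, the next $w-1$ digits are forced to be zero, so
\begin{equation*}
  T^w(\alpha) = \Phi^{-w}(\alpha-\delta) = \Phi^{-w}(\alpha) - \Phi^{-w}(\delta).
\end{equation*}
Now $\Phi^{-w}(\alpha)$ and $\Phi^{-w}(\delta)$ both lie close to the same point: since $\delta\in\widetilde{\calD}$ we have $\Phi^{-w}(\delta)\in V$, and since $\delta\equiv\alpha\pmod{\Phi^w(\Lambda)}$ the element $\Phi^{-w}(\alpha)$ differs from $\Phi^{-w}(\delta)$ by a lattice vector; choosing the representative via the tiling (Lemma~\ref{lemma:tiling-residue-system}) one gets $\|T^w(\alpha)\|\le R$ whenever... — more precisely, the right way to phrase the descent is: if $\|\alpha\|$ is large, then $\|\Phi^{-w}(\alpha)\|$ is small, so $T^w(\alpha)$ is within $R$ of a point in $V$ hence bounded, and if $\|\alpha\|\le$ (some explicit bound determined by $r$, $R$, and $\|\Phi^{-1}\|^w$) then in fact $d(\alpha)$ can be chosen so that $\|T^w(\alpha)\| < \|\alpha\|$ or $T^w(\alpha)=0$. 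The inequality~\eqref{eq:tiling-endomorphism-bound}, rewritten as $\|\Phi^{-1}\|^w(1+R/r)<1$, is exactly what makes this a strict contraction on the relevant region.

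Spelling out the descent on the bounded region: set $c:=\|\Phi^{-1}\|^w<1$. For $d(\alpha)=\delta$ one estimates
\begin{equation*}
  \|T^w(\alpha)\| = \|\Phi^{-w}(\alpha-\delta)\| \le c\,\|\alpha-\delta\|.
\end{equation*}
The digit $\delta$ is chosen from the tiling so that $\Phi^{-w}(\delta)\in V$ lies in the tile containing $\Phi^{-w}(\alpha)$; comparing with the case $\alpha$ replaced by the unique lattice point whose image under $\Phi^{-w}$ sits in that tile gives $\|\alpha-\delta\|\le \|\alpha\| + \|\delta\|$ together with a bound $\|\delta\|\le \|\Phi^w\|\cdot R$ — but more usefully one argues directly on $\Phi^{-w}(\alpha)$: writing $v:=\Phi^{-w}(\alpha) \bmod \Lambda$ (the representative in $V$) one has $\alpha - \delta = \Phi^{w}(\Phi^{-w}(\alpha) - v)$ up to the issue of $\alpha\in\Phi(\Lambda)$, hence one must treat that degenerate case (digit $0$) separately, where $T(\alpha)=\Phi^{-1}(\alpha)$ and $\|T(\alpha)\|\le c^{1/w}\|\alpha\|$ already contracts. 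I would therefore split into the two cases $\alpha\in\Phi(\Lambda)$ and $\alpha\notin\Phi(\Lambda)$, handle the first by plain contraction under $\Phi^{-1}$, and the second by the $T^w$-estimate above, concluding in both cases that after finitely many steps $T^{\ast}(\alpha)$ enters the finite set $\Lambda\cap\{\|x\|\le \rho\}$ for an explicit radius $\rho = \rho(r,R,c)$; on that finite set the same contraction shows no orbit can cycle except through $0$, so $T^{\ell}(\alpha)=0$ for some $\ell$.

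The step I expect to be the main obstacle is the careful choice of the representative digit and the resulting triangle-inequality bookkeeping in the non-degenerate case: one must ensure that when $\Phi^{-w}(\alpha)$ is already small (say $\|\Phi^{-w}(\alpha)\|\le r$), it lies in $V$ itself, so that $\alpha\in\widetilde\calD$ and $\delta=\alpha$ can be taken (modulo the $\Phi(\Lambda)$ exclusion), forcing $T^w(\alpha)=0$; and when it is not small, that the tiling still lets one subtract off a digit bringing the norm down by the factor coming from~\eqref{eq:tiling-endomorphism-bound}. Getting the constants to line up — i.e.\ verifying that $c<1/(1+R/r)$ is precisely the threshold at which $\|T^w(\alpha)\|<\|\alpha\|$ for all $\alpha$ outside the terminal ball — is the crux, and it is essentially the geometric-series bound $R\sum_{k\ge0} c^k = R/(1-c) < r/c$ rearranged. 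Once that is in place, finiteness of $\Lambda\cap\{\|x\|\le\rho\}$ and the observation $T(0)=0$ finish the argument via Lemma~\ref{lemma:w-NAF-backwards-division}.
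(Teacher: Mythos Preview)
Your approach is essentially the paper's: split into the two cases digit $=0$ (one step of $\Phi^{-1}$) and digit $\neq 0$ ($w$ steps, using $\|\Phi^{-w}(\delta)\|\le R$), iterate to enter a bounded region, and terminate via the observation that once $\|\Phi^{-w}(\beta)\|<r$ and $\beta\notin\Phi(\Lambda)$, then $\beta$ lies in the interior of $\Phi^{w}(V)$, hence $d(\beta)=\beta$ and $T(\beta)=0$. You also correctly identify the arithmetic $c<1/(1+R/r)\iff R/(1-c)<r/c$ as what makes the constants fit.

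There is, however, a genuine gap in the middle paragraph: the sentence ``on that finite set the same contraction shows no orbit can cycle except through $0$'' is not justified, and in fact the affine contraction $\|T^w(\beta)\|\le c\|\beta\|+R$ alone does \emph{not} rule out cycles inside the ball of radius $R/(1-c)$. The mechanism that actually terminates the orbit is precisely the self-digit observation you describe in the next paragraph---but you treat it as a separate ``obstacle'' rather than as the resolution of the cycle question. The paper glues these pieces together by proving a uniform bound $\|T^k(\alpha)\|\le R/(1-c)+c^{k/w}\text{-type term}$ valid for \emph{all} admissible $k$ (not merely eventual entry into a ball), from which $\|\Phi^{-w}(T^k(\alpha))\|<r$ for all $k\ge k_0$; one then strips off zero digits (using that $\Phi$ is expanding, so $\beta\neq 0$ cannot lie in $\Phi^m(\Lambda)$ for all $m$) until reaching some $T^k(\alpha)\notin\Phi(\Lambda)$, which is then its own digit. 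Your sketch has all of these ingredients scattered across the three paragraphs; the fix is to drop the unsupported ``no cycle'' claim and route the argument through the uniform bound and the self-digit termination directly.
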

\begin{remark}\label{remark:Euclidean-norm-number-field}
  In the case of expansions in an order of a number field
(Example~\ref{example:number-field-case-and-lattice}), we may take
$\|\,\cdot\,\|$ to be the Euclidean norm $\|\,\cdot\,\|_2$, as the
corresponding operator norm fulfils $\|A_\tau^{-1}\|_2=\max\{1/|\sigma_j(\tau)|:
1\le j\le s+t\}$. In this case, \eqref{eq:tiling-endomorphism-bound} is
equivalent to $|\sigma_j(\tau)|^w>1+R/r$ for all $1\le j\le s+t$.
\end{remark}

\begin{proof}[Proof of Theorem~\ref{thm:tiling-wNADS}]
  Let $\alpha\in\Lambda$. We claim that 
  \begin{equation}\label{eq:backwards-division-iteration-bound}
    \|T^k(\alpha)\|\le \frac{R}{1-\|\Phi^{-1}\|^{w}}+\|\Phi^{-1}\|^{k}\cdot\|\alpha\|
  \end{equation}
  holds for all $k$ with the property that 
  $d(T^{k'}(\alpha))=0$ holds for all non-negative $k'$ with $k-w<k'\le k$.

  For $k=0$, \eqref{eq:backwards-division-iteration-bound} is obviously
  true. We assume that \eqref{eq:backwards-division-iteration-bound} holds for
  some $k$. As an abbreviation, we write $\beta=T^k(\alpha)$ and
  $\eta=d(\beta)$. If $\eta=0$, then we have
  \begin{equation*}
    \|T^{k+1}(\alpha)\|=\|T(\beta)\|=\|\Phi^{-1}(\beta)\|\le\|\Phi^{-1}\|\cdot\|\beta\|\le
    \frac{\|\Phi^{-1}\|\cdot R}{1-\|\Phi^{-1}\|^w}+\|\Phi^{-1}\|^{k+1}\cdot \|\alpha\|,
  \end{equation*}
  which proves \eqref{eq:backwards-division-iteration-bound} for $k+1$.

  In the case $\eta\neq 0$, we get
  \begin{align*}
    \|T^{k+w}(\alpha)\|&=\|\Phi^{-w}(\beta-\eta)\|\le \|\Phi^{-1}\|^w\cdot\|\beta\|+\|\Phi^{-w}(\eta)\|\\
  &\le \|\Phi^{-1}\|^w\left(\frac{R}{1-\|\Phi^{-1}\|^{w}}+\|\Phi^{-1}\|^{k}\cdot\|\alpha\|\right)+R
  =  \frac{R}{1-\|\Phi^{-1}\|^{w}}+\|\Phi^{-1}\|^{k+w}\cdot\|\alpha\|,
  \end{align*}
  which is \eqref{eq:backwards-division-iteration-bound} for $k+w$.

  By \eqref{eq:tiling-endomorphism-bound} and
  \eqref{eq:backwards-division-iteration-bound}, we can choose a $k_0$ such
  that 
  \begin{equation}\label{eq:backwards-division-embedding-bound}
    \|\Phi^{-w}(T^k(\alpha))\| \le
    \frac{\|\Phi^{-1}\|^w}{1-\|\Phi^{-1}\|^w}R+\|\Phi^{-1}\|^{k+w}\cdot\|\alpha\|< r
  \end{equation}
  holds for all $k\ge k_0$. 

  If $T^{k_0}(\alpha)=0$, then $\alpha$ admits a $\calD$-\wNAF{} by
  Lemma~\ref{lemma:w-NAF-backwards-division}. Otherwise, choose $k\ge k_0$
  maximally such that $T^{k_0}(\alpha)\in\Phi^{k-k_0}(\Lambda)$. This is
  possible because $\Phi$ is expanding.
  This results in
  $T^k(\alpha)\notin \Phi(\Lambda)$. Then
  \eqref{eq:backwards-division-embedding-bound} implies that
  \begin{equation*}
    \|\Phi^{-w}(T^k(\alpha))\|<r.
  \end{equation*}
  By \eqref{eq:V-bounds}, we conclude that $\Phi^{-w}(T^k(\alpha))$
  is an element of the interior of $V$. 

  By Lemma~\ref{lemma:tiling-residue-system}, we obtain
  $\Phi^{-w}(T^k(\alpha))\in\calD^\bullet$, hence $d(T^k(\alpha))=T^k(\alpha)$
  and $T^{k+1}(\alpha)=0$. Thus $\alpha$ admits a $\calD$-\wNAF{} by
  Lemma~\ref{lemma:w-NAF-backwards-division}.
\end{proof}

\section{Minimal Norm Digit Set}
\label{sec:min-norm}

In this section, we study a special digit set, the minimal norm digit set. In
the case of an imaginary quadratic integer $\tau$, this notion coincides with
the minimal norm representative digit sets introduced by
Solinas~\cite{Solinas:1997:improved-algorithm,Solinas:2000:effic-koblit}.

Let again $\Lambda$ be a lattice in $\R^n$ and $\Phi$ an expansive endomorphism
of $\R^n$ with $\Phi(\Lambda)\subseteq\Lambda$. Choose a positive integer $w_0$
such that $|\lambda|>2^{1/w_0}$ holds for all eigenvalues $\lambda$ of
$\Phi$. Thus the spectral radius of $\Phi^{-1}$ is less than $1/2^{1/w_0}$. We
choose a vector norm $\|\,\cdot\,\|$ on $\R^n$ such that the induced operator
norm (also denoted by $\|\,\cdot\,\|$) fulfils $\|\Phi^{-1}\|<1/2^{1/w_0}$. As
a consequence, we have $\|\Phi^{-1}\|^w<1/2$ for all $w\ge w_0$. 

Again, in the case of expansions in an order of a number field
(Example~\ref{example:number-field-case-and-lattice}), we may take
$\|\,\cdot\,\|$ to be the Euclidean norm $\|\,\cdot\,\|_2$, cf.\
Remark~\ref{remark:Euclidean-norm-number-field}.

Let $V$ be the Voronoi cell of the origin with respect to the point set
$\Lambda$ and the vector norm $\|\,\cdot\,\|$, i.e.,
\begin{equation*}
  V=\{z\in\R^n : \|z\|\le \|z+\alpha\|\text{ holds for all }\alpha\in\Lambda\}.
\end{equation*}

While $V$ does not necessarily tile $\R^n$ by $\Lambda$ (consider the norm
$\|\,\cdot\,\|_\infty$ and the lattice generated by $(1,0)$ and $(0,10)$ in
$\R^2$), for a given integer $w\ge 1$, we can still select a set
$\calD^\bullet$ of representatives of those residue classes of $\Lambda$
modulo $\Phi^w(\Lambda)$ which are not contained in $\Phi^w(\Lambda)$ such that
\begin{equation*}
  \calD^\bullet\subseteq \{\alpha\in\Lambda: \Phi^{-w}(\alpha)\in V\}.
\end{equation*}
As usual, we also set $\calD:=\calD^\bullet\cup \{0\}$ and call it a
\emph{minimal norm digit set modulo~$\Phi^w$}.

Adapting ideas of Germ\'an and
Kov\'acs~\cite{German-Kovacs:2007:number-system-const} to our setting, we prove
the following theorem.

\begin{theorem}\label{theorem:norm-2} If $w\ge w_0$, then $\calD$ is a
  \wNADS.
\end{theorem}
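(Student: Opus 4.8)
The plan is to derive Theorem~\ref{theorem:norm-2} from Theorem~\ref{thm:tiling-wNADS} by mimicking its proof, since the Voronoi cell $V$ enjoys the essential geometric properties we used there even though it need not tile $\R^n$. First I would record the bounds on $V$: with $R$ a radius such that $V\subseteq\{x:\|x\|\le R\}$ (such an $R$ exists because $\Lambda$ is a lattice, hence $V$ is bounded), and with the key feature that \emph{every} point of $\R^n$ lies within distance $R$ of $\Lambda$ — indeed, for any $z\in\R^n$ there is a lattice point $\alpha$ with $z-\alpha\in V$, so $\|z-\alpha\|\le R$. That is all the ``tiling'' we actually need. The hypothesis $w\ge w_0$ gives $\|\Phi^{-1}\|^w<1/2$, which will play the role of \eqref{eq:tiling-endomorphism-bound}; note $1/2<1/(1+R/r)$ would require $r>R$, which we do not have, so we cannot literally invoke Theorem~\ref{thm:tiling-wNADS} and must instead re-run its argument with the single-radius estimate.

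Concretely, I would first re-prove the analogue of \eqref{eq:backwards-division-iteration-bound}: for $\alpha\in\Lambda$ and for all $k$ such that $d(T^{k'}(\alpha))=0$ for all non-negative $k'$ with $k-w<k'\le k$, one has
\begin{equation*}
  \|T^k(\alpha)\|\le \frac{R}{1-\|\Phi^{-1}\|^{w}}+\|\Phi^{-1}\|^{k}\cdot\|\alpha\|,
\end{equation*}
by exactly the same induction as in the proof of Theorem~\ref{thm:tiling-wNADS}: the step with $\eta=0$ uses submultiplicativity of the operator norm, and the step with $\eta\ne0$ uses $\|\Phi^{-w}(\eta)\|\le R$, which holds because $\eta\in\calD^\bullet$ means $\Phi^{-w}(\eta)\in V\subseteq\{x:\|x\|\le R\}$. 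Then, since $\|\Phi^{-1}\|^w<1/2$, the quantity $\|\Phi^{-w}(T^k(\alpha))\|\le\|\Phi^{-1}\|^w\bigl(\frac{R}{1-\|\Phi^{-1}\|^{w}}+\|\Phi^{-1}\|^{k}\|\alpha\|\bigr)$ tends, as $k\to\infty$, to something $\le\frac{\|\Phi^{-1}\|^w}{1-\|\Phi^{-1}\|^w}R<R$; so there is $k_0$ with $\|\Phi^{-w}(T^k(\alpha))\|<R$ for all $k\ge k_0$.

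Here is the point where the argument must diverge from the tiling case. In Theorem~\ref{thm:tiling-wNADS} we concluded that $\Phi^{-w}(T^k(\alpha))$ lies in the \emph{interior} of $V$, hence is the unique representative of its class, hence equals $d(T^k(\alpha))$. Now we only know $\|\Phi^{-w}(T^k(\alpha))\|<R$, which is not enough to pin down interiority. Instead I would argue directly with the minimal-norm property of $\calD^\bullet$: pick $k\ge k_0$ maximal with $T^{k_0}(\alpha)\in\Phi^{k-k_0}(\Lambda)$ (possible since $\Phi$ is expanding), so that $\gamma:=T^k(\alpha)\notin\Phi(\Lambda)$. Let $\delta\in\calD^\bullet$ be the chosen representative of the class of $\gamma$ modulo $\Phi^w(\Lambda)$; then $\Phi^{-w}(\delta)\in V$, so by the defining inequality of the Voronoi cell, $\|\Phi^{-w}(\delta)\|\le\|\Phi^{-w}(\delta)+\mu\|$ for every $\mu\in\Lambda$, in particular for $\mu=\Phi^{-w}(\gamma-\delta)\in\Lambda$, giving $\|\Phi^{-w}(\delta)\|\le\|\Phi^{-w}(\gamma)\|<R$. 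But I want to conclude $\gamma=\delta$, i.e.\ that $\gamma$ itself is already the minimal-norm representative. The cleanest way: since $\Phi^{-w}(\gamma)$ has norm $<R$ and $V$ is the Voronoi cell, if $\Phi^{-w}(\gamma)\notin V$ then there is $\alpha_0\in\Lambda\setminus\{0\}$ with $\|\Phi^{-w}(\gamma)+\alpha_0\|<\|\Phi^{-w}(\gamma)\|$ — but this does not immediately contradict anything unless we chose $\calD^\bullet$ to contain \emph{strictly} minimal representatives, which a Voronoi-cell choice does not guarantee on the boundary.

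The main obstacle, then, is exactly this boundary ambiguity: the honest fix is to observe that we do not need $\gamma=\delta$, only $T^{k+1}(\alpha)=0$, and for that it suffices that $d(\gamma)=\gamma$, i.e.\ that $\gamma$ is \emph{a} legitimate choice of representative — which holds precisely when $\Phi^{-w}(\gamma)\in V$. So the real task is to show $\Phi^{-w}(\gamma)\in V$, not merely $\|\Phi^{-w}(\gamma)\|<R$; and for that I would sharpen the estimate on $\|\Phi^{-w}(T^k(\alpha))\|$. Since $\|\Phi^{-1}\|^w<1/2$, the limiting bound $\frac{\|\Phi^{-1}\|^w}{1-\|\Phi^{-1}\|^w}R$ is strictly less than $R$; but to land inside $V$ I would instead use that the diameter considerations force: if $x\in\R^n$ with $\|x\|<\tfrac12\,\mathrm{dist}(0,\Lambda\setminus\{0\})$ then $x\in\mathrm{int}\,V$. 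Combining this with a lower radius $r$ for which $\{x:\|x\|\le r\}\subseteq V$ (which exists, with $2r\le\mathrm{dist}(0,\Lambda\setminus\{0\})$ — no, more carefully, $r$ can be taken as half the minimal distance), and noting that along the subsequence of indices with $d=0$ the bound $\|\Phi^{-1}\|^w<1/2$ makes $\|\Phi^{-w}(T^k(\alpha))\|$ eventually $<r$: for $k$ large, $\|\Phi^{-1}\|^{k+w}\|\alpha\|$ is negligible and the stationary part is $\le\frac{\|\Phi^{-1}\|^w}{1-\|\Phi^{-1}\|^w}R$, which is $<R$ but I must force it $<r$. This is where I would track constants more carefully: with $\|\Phi^{-1}\|^w$ not just $<1/2$ but with the extra room from $w\ge w_0$ versus the spectral radius, one pushes the stationary part below $r$; alternatively, and this is the cleaner route I would ultimately take, one replaces $R$ by an adapted norm (scale the operator norm so that $\|\Phi^{-1}\|$ is as close to the spectral radius as desired, which also shrinks the effective $R/r$ ratio), making the iteration contract into $\mathrm{int}\,V$ after finitely many steps. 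Then Lemma~\ref{lemma:w-NAF-backwards-division} finishes: $T^{k+1}(\alpha)=0$, so $\alpha$ has a $\calD$-\wNAF{}.
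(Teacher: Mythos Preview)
Your plan has a genuine gap that the fixes you sketch do not close.

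First, a logical slip: you write that ``$d(\gamma)=\gamma$, i.e.\ that $\gamma$ is \emph{a} legitimate choice of representative --- which holds precisely when $\Phi^{-w}(\gamma)\in V$.'' This is not right. The digit set $\calD^\bullet$ is a \emph{fixed} selection; knowing $\Phi^{-w}(\gamma)\in V$ only says $\gamma$ \emph{could have been} chosen, not that it was. You would need $\Phi^{-w}(\gamma)\in\interior V$ to force uniqueness via Lemma~\ref{lemma:tiling-residue-system}, as in the tiling proof.

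Second, and more seriously, neither proposed route gets you into $\interior V$. The stationary part of your bound is $\frac{\|\Phi^{-1}\|^w}{1-\|\Phi^{-1}\|^w}R$, and making this $<r$ is exactly the inequality $\|\Phi^{-1}\|^w<\frac{1}{1+R/r}$ of Theorem~\ref{thm:tiling-wNADS}, which is strictly stronger than the $\|\Phi^{-1}\|^w<\tfrac12$ you actually have. Your fallback of ``adapting the norm'' is not available: the Voronoi cell $V$, and hence the digit set $\calD$ itself, is defined relative to the chosen norm; changing the norm changes the theorem you are proving. And there is no reason a norm that makes $\|\Phi^{-1}\|$ close to the spectral radius should simultaneously tame the eccentricity $R/r$ of the lattice's Voronoi cell.

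The paper's argument takes a different tack that sidesteps all of this. It first observes that the orbit $(T^k(\alpha))_{k\ge 0}$ is eventually periodic (since $T$ is non-expanding outside a bounded set) and reduces to showing there is no nontrivial cycle. Assuming a cycle exists, pick $\alpha$ on it maximising $\|\Phi^{-w}(\alpha)\|$. The Voronoi property now enters not to locate $\gamma$ inside $V$, but only to give the comparison $\|\Phi^{-w}(\eta_k)\|\le\|\Phi^{-w}(T^k(\alpha))\|$ whenever $\eta_k=d(T^k(\alpha))\neq 0$, since $\Phi^{-w}(\eta_k)$ and $\Phi^{-w}(T^k(\alpha))$ differ by a lattice vector. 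Combined with the maximality of $\alpha$, every nonzero digit on the cycle satisfies $\|\Phi^{-w}(\eta_k)\|\le\|\Phi^{-w}(\alpha)\|$. Unwinding the cycle relation $\alpha=T^\ell(\alpha)$ and summing a geometric series with ratio $\|\Phi^{-1}\|^w<\tfrac12$ then yields $\|\Phi^{-w}(\alpha)\|<\|\Phi^{-w}(\alpha)\|$, a contradiction. The point is that the minimal-norm property is used \emph{relatively} (comparing a digit to the orbit point it came from), never to certify membership in $\calD^\bullet$; that is what makes $\tfrac12$ alone suffice.
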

\begin{proof}
  We set $\widetilde M:=\max\{ \|\eta\|:\eta\in\calD\}$. For
  $\beta\in\Lambda$, we have
  \begin{equation*}
    \|T(\beta)\|=\|\Phi^{-1}(\beta-d(\beta))\|\le \|\Phi^{-1}\|(\|\beta\|+\widetilde{M}).
  \end{equation*}
  Setting 
  \begin{equation*}
    M:=\frac{\|\Phi^{-1}\|}{1-\|\Phi^{-1}\|} \widetilde M,
  \end{equation*}
  we see that
  \begin{alignat*}{2}
    \|T(\beta)\|&<\|\beta\|&\quad\text{ if }\|\beta\|&>M,\\
    \|T(\beta)\|&\le M&\quad\text{ if }\|\beta\|&\le M.
  \end{alignat*}
  As $\Lambda$ is a discrete subset of $\R^n$, we conclude that the sequence
  $(T^k(\alpha))_{k\ge 0}$ is eventually periodic for all $\alpha\in\Lambda$.

  For $\beta\in\Phi(\Lambda)$ with $\beta\neq 0$,  we have 
  \begin{equation*}
    \|T(\beta)\|=\|\Phi^{-1}(\beta)\|\le \|\Phi^{-1}\|\cdot\|\beta\|<\|\beta\|. 
  \end{equation*}
  Consider the set
  \begin{equation*}
    P:=\{\beta\in\Lambda: \beta\notin\Phi(\Lambda) \text{ and
    }(T^k(\beta))_{k\ge 0}\text{ is purely periodic}\}.
  \end{equation*}
  The set $P$ is empty if and only if for each $\alpha\in\Lambda$, there is an
  $\ell$ with $T^\ell(\alpha)=0$, i.e., $\alpha$ admits a
  $\calD$-\wNAF. Therefore, by Lemma~\ref{lemma:w-NAF-backwards-division},
  $P$ is empty if and only if $\calD$ is a \wNADS.

  We therefore assume that $P$ is nonempty. We choose an $\alpha\in P$ such
  that $\|\Phi^{-w}(\alpha)\|\ge \|\Phi^{-w}(\beta)\|$ holds for all $\beta\in
  P$. This is possible, since all elements $\beta$ of $P$ fulfil $\|\beta\|\le
  M$, which implies that $P$ is a finite set.

  Next, we choose $\ell>0$ with $T^{\ell}(\alpha)=\alpha$ and set
  $\eta_k=d(T^k(\alpha))$ for $0\le k\le \ell$. We set
  \begin{equation*}
    N:=\{ 0\le k\le\ell : \eta_k\neq 0\}.
  \end{equation*}
  By the \wNAF-condition, we have $|k-k'|\ge w$ for distinct elements $k$ and
  $k'$ of $N$.

  By definition of $T$, we have
  \begin{equation*}
    \alpha=T^{\ell}(\alpha)=\Phi^{-\ell}\Bigl(\alpha-\sum_{k=0}^{\ell-1}
    \Phi^{k}(\eta_k)\Bigr)=
    \Phi^{-\ell}(\alpha)-\sum_{k=0}^{\ell-1}\Phi^{k-\ell}(\eta_k).
  \end{equation*}
  Applying $\Phi^{-w}$ once more and rearranging yields
  \begin{equation}\label{eq:phi-inverse-alpha-explicit}
    \Phi^{-w}(\alpha)=(\id-\Phi^{-\ell})^{-1}\Bigl(-\sum_{\substack{k=0\\k\in N}}^{\ell-1}\Phi^{k-\ell}(\Phi^{-w}(\eta_k))\Bigr).
  \end{equation}
  Note that we restricted the sum to those $k$ corresponding to non-zero
  digits. 

  We claim that 
  \begin{equation}\label{eq:digit-inequality-cycle}
    \|\Phi^{-w}(\eta_k)\|\le \|\Phi^{-w}(T^k(\alpha))\| \le \|\Phi^{-w}(\alpha)\|
  \end{equation}
  holds for $k\in N$. The first inequality is an immediate consequence of the
  definition of $\calD^\bullet$, as
  $\Phi^{-w}(T^k(\alpha))=\Phi^{-w}(\eta_k)+\gamma$ for a suitable
  $\gamma\in\Lambda$. Here, we used that $\eta_k\neq 0$ implies that
  $T^k(\alpha)\notin\Phi(\Lambda)$. Therefore and as $T^{k+\ell}(\alpha)=T^k(T^{\ell}(\alpha))=T^k(\alpha)$, we also get
  $T^k(\alpha)\in P$. By the choice of $\alpha$, we conclude the second
  inequality in \eqref{eq:digit-inequality-cycle}.

  Taking norms in \eqref{eq:phi-inverse-alpha-explicit} yields
  \begin{equation}\label{eq:phi-inverse-alpha-norm-1}
    \|\Phi^{-w}(\alpha)\|\le
    \frac{\|\Phi^{-w}(\alpha)\|}{1-\|\Phi^{-1}\|^{\ell}}\sum_{\substack{k=0\\k\in
    N}}^{\ell-1}\|\Phi^{-1}\|^{\ell-k}.
  \end{equation}
  As $\ell\in N$, we have
  \begin{equation}\label{eq:phi-inverse-alpha-norm-geometric-series}
    \sum_{\substack{k=0\\k\in
    N}}^{\ell-1}\|\Phi^{-1}\|^{\ell-k}\le \|\Phi^{-1}\|^w+\|\Phi^{-1}\|^{2w}+\cdots+\|\Phi^{-1}\|^{mw}=\|\Phi^{-1}\|^w\frac{1-\|\Phi^{-1}\|^{mw}}{1-\|\Phi^{-1}\|^w},
  \end{equation}
  where $m=\lfloor \ell/w\rfloor$. Combining
  \eqref{eq:phi-inverse-alpha-norm-1} and
  \eqref{eq:phi-inverse-alpha-norm-geometric-series} yields
  \begin{equation*}
    \|\Phi^{-w}(\alpha)\|\le
    \frac{\|\Phi^{-1}\|^w}{1-\|\Phi^{-1}\|^w}\frac{1-\|\Phi^{-1}\|^{mw}}{1-\|\Phi^{-1}\|^{\ell}}\|\Phi^{-w}(\alpha)\|<\|\Phi^{-w}(\alpha)\|,
  \end{equation*}
  as $\|\Phi^{-1}\|^w<1/2$, contradiction.
\end{proof}

We restate this result explicitly for expansion in orders of algebraic number
fields.

\begin{corollary}\label{cor:mnr-digit-set-number-field}
  Let $K$ be an algebraic number field of degree $n$, $\sigma_1$, \ldots,
  $\sigma_s$ the real embeddings and $\sigma_{s+1}$, $\overline{\sigma_{s+1}}$,
  \ldots, $\sigma_{s+t}$, $\overline{\sigma_{s+t}}$ be the non-real complex
  embeddings of $K$. 


  Let $\order$ be an order of $K$ and $\tau\in\order$ such that
  $|\sigma_j(\tau)|>1$ holds for all $j$. Let $w$ be an integer with
  \begin{equation*}
    w> \max\left\{ \frac{\log 2}{\log |\sigma_j(\tau)|}: 1\le j\le s+t\right\}.
  \end{equation*}

  Let $\calD^\bullet$ be a system of representatives of those residue classes
  of $\order$ modulo $\tau^w$ which are not divisible by $\tau$ such that 
  \begin{equation*}
    \text{if $\alpha\equiv\beta\tpmod{\tau^w}$ with $\tau\nmid\alpha$ and
      $\alpha\in \calD$, then
      $\sum_{j=1}^{s+t} a_j 
      \Bigl|\sigma_j\Bigl(\frac{\alpha}{\tau^w}\Bigr)\Bigr|^2
      \leq\sum_{j=1}^{s+t} 
      a_j \Bigl|\sigma_j\Bigl(\frac{\beta}{\tau^w}\Bigr)\Bigr|^2$},
  \end{equation*}
  where $a_j=1$ for $j\in\{1,\dots,s\}$ and $a_j=2$ for
  $j\in\{s+1,\dots,s+t\}$. Then $\calD:=\calD^\bullet\cup\{0\}$ is a
  \wNADS.
\end{corollary}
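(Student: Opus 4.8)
The plan is to deduce the corollary from Theorem~\ref{theorem:norm-2} by realising the number-field situation as a lattice situation, using the construction of Example~\ref{example:number-field-case-and-lattice} equipped with a suitably weighted Euclidean norm on $\R^n$, chosen so that the quadratic form $\sum_{j=1}^{s+t} a_j|\sigma_j(\,\cdot\,)|^2$ appearing in the hypothesis becomes the squared norm.

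Concretely, I would equip $\R^n$, with coordinates ordered as in Example~\ref{example:number-field-case-and-lattice}, with the weighted Euclidean norm $\|\,\cdot\,\|_*$ in which the two coordinates belonging to each non-real place carry weight $\sqrt2$ and the real coordinates carry weight $1$; equivalently $\|x\|_*=\|Dx\|_2$ for the block-scalar diagonal matrix $D$ that is the identity on the real coordinates and $\sqrt2$ times the identity on each complex coordinate pair. Then $\|\Sigma(\gamma)\|_*^2=\sum_{j=1}^{s+t} a_j|\sigma_j(\gamma)|^2$ for all $\gamma\in K$. Since $A_\tau$ is block-diagonal with exactly those blocks, $D$ commutes with $A_\tau$, so $\|{\Phi'}^{-1}\|_*=\|DA_\tau^{-1}D^{-1}\|_2=\|A_\tau^{-1}\|_2=\max\{1/|\sigma_j(\tau)|:1\le j\le s+t\}$ by Remark~\ref{remark:Euclidean-norm-number-field}, where $\Phi'$ denotes multiplication by $A_\tau$ on $\R^n$. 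By Example~\ref{example:number-field-case-and-lattice}, $(\order,\Phi_\tau,\calD)$ is equivalent to the lattice pre-\wNADS{} $(\Lambda,\Phi',\calD')$ with $\Lambda=\Sigma(\order)$ and $\calD'=\Sigma(\calD)$, and since the \wNADS{} property is preserved under equivalence, it suffices to show that $\calD'$ is a \wNADS.

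It then remains to verify the hypotheses of Theorem~\ref{theorem:norm-2} for $(\Lambda,\Phi',\calD')$ relative to $\|\,\cdot\,\|_*$. The eigenvalues of $\Phi'$ are the $\sigma_j(\tau)$ and $\overline{\sigma_j(\tau)}$, all of absolute value $|\sigma_j(\tau)|>1$; clearing the positive logarithms, the assumption $w>\max\{\log 2/\log |\sigma_j(\tau)|:1\le j\le s+t\}$ is equivalent to $|\sigma_j(\tau)|^w>2$ for all $j$, hence to $|\lambda|>2^{1/w}$ for every eigenvalue $\lambda$ of $\Phi'$. So, in the notation preceding Theorem~\ref{theorem:norm-2}, we may take $w_0:=w$ and use $\|\,\cdot\,\|_*$ as the norm, since then $\|{\Phi'}^{-1}\|_*=1/\min_j|\sigma_j(\tau)|<2^{-1/w}=2^{-1/w_0}$. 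Letting $V$ be the Voronoi cell of the origin for $\Lambda$ and $\|\,\cdot\,\|_*$, I would then check that $\calD'$ is a minimal norm digit set modulo ${\Phi'}^w$: for $\alpha\in\order$ one has ${\Phi'}^{-w}(\Sigma(\alpha))=\Sigma(\alpha/\tau^w)$ and ${\Phi'}^w(\Lambda)=\Sigma(\tau^w\order)$, so, writing a general element of the residue class of $\Sigma(\alpha)$ modulo ${\Phi'}^w(\Lambda)$ as $\Sigma(\beta)$ with $\beta\in\order$ and $\beta\equiv\alpha\pmod{\tau^w}$, the condition ${\Phi'}^{-w}(\Sigma(\alpha))\in V$ reads $\|\Sigma(\alpha/\tau^w)\|_*\le\|\Sigma(\beta/\tau^w)\|_*$ for all such $\beta$, i.e.\ $\sum_j a_j|\sigma_j(\alpha/\tau^w)|^2\le\sum_j a_j|\sigma_j(\beta/\tau^w)|^2$. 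This is exactly the minimality condition imposed on $\calD^\bullet$ in the statement; note that every residue class modulo $\tau^w$ lies inside a single residue class modulo $\tau$, so ``indivisible by $\tau$'' and ``not contained in $\Phi'(\Lambda)$'' describe the same classes, and minimality over indivisible representatives is the same as minimality over all. Hence $(\calD')^\bullet\subseteq\{\alpha\in\Lambda:{\Phi'}^{-w}(\alpha)\in V\}$, Theorem~\ref{theorem:norm-2} applies, and $\calD'$ — and therefore $\calD$ — is a \wNADS.

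I expect no genuine obstacle here, since the argument is essentially a translation of Theorem~\ref{theorem:norm-2}. The one place needing care is the bookkeeping around the weights: confirming that $D$ really commutes with $A_\tau$, so that $\|{\Phi'}^{-1}\|_*$ equals $\|A_\tau^{-1}\|_2$ and is thus governed by Remark~\ref{remark:Euclidean-norm-number-field}; that $\|\Sigma(\,\cdot\,)\|_*^2$ is indeed the weighted form $\sum_j a_j|\sigma_j(\,\cdot\,)|^2$; and that the bound on $w$ is equivalent to the eigenvalue condition $|\lambda|>2^{1/w_0}$. A stray factor of $2$ in the weights is about the only thing that could derail the proof.
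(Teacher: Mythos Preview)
Your proposal is correct and follows precisely the route the paper intends: the corollary is stated without proof as an explicit restatement of Theorem~\ref{theorem:norm-2} for the number-field setting, via the lattice realisation of Example~\ref{example:number-field-case-and-lattice}. Your use of the weighted Euclidean norm $\|\,\cdot\,\|_*$ (so that the form $\sum_j a_j|\sigma_j(\cdot)|^2$ becomes the squared norm) is the right way to make the coefficients $a_j$ in the hypothesis match the Voronoi condition, and your observation that $D$ commutes with $A_\tau$ so that $\|{\Phi'}^{-1}\|_*=\|A_\tau^{-1}\|_2$ is exactly the check needed to invoke Remark~\ref{remark:Euclidean-norm-number-field}.
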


\begin{example}\label{example:alg-curves}
  Let $C$ be an algebraic curve of genus~$g$ defined over $\mathbb{F}_q$ (a
  field with $q$ elements). The Frobenius endomorphism operates on the Jacobian
  variety of $C$ and satisfies a characteristic polynomial $P\in\Z[T]$ of
  degree~$2g$. Let $\tau$ be a root of $P$. Set $K=\Q[\tau]$ and
  $\order=\Z[\tau]$, and denote the embeddings of $K$ by $\sigma_j$. Using
  Corollary~\ref{cor:mnr-digit-set-number-field}, a minimal norm digit set
  modulo $\tau^w$ is a \wNADS{} if
  \begin{equation*}
    w > \frac{\log 4}{\log q}.
  \end{equation*}
  This is true because of the following reasons: The polynomial $P$ fulfils the
  equation
  \begin{equation*}
    P(T) = T^{2g} L(1/T),
  \end{equation*}
  where $L(T)$ denotes the numerator of the zeta-function of $C$ over
  $\mathbb{F}_q$, cf.\
  Weil~\cite{Weil:1948:var-ab-et-courbes-alg, Weil:1971:courbes-alg-et-var-ab}.
  The Riemann Hypothesis of the Weil Conjectures,
  cf.\ Weil~\cite{Weil:1949}, Dwork~\cite{Dwork:1960} and
  Deligne~\cite{Deligne:1974}, state that all zeros of $L$ have absolute value
  $1/\sqrt{q}$. Therefore $|\sigma_j(\tau)| = \sqrt{q}$, which was to show.
\end{example}

\section{Optimality of $\calD$-\wNAF{}s}
\label{sec:optimality}

In this section, we consider a lattice $\Lambda\subseteq \R^n$ and an expanding
endomorphism $\Phi$ of $\R^n$ with $\Phi(\Lambda)\subseteq\Lambda$. 

\begin{definition}
  Let $\bfeta = \eta_{\ell-1}\dots\eta_0$ be a word over the alphabet
  $\calD$. Its \emph{(Hamming-)weight} is the cardinality of $\{j :
  \eta_j\neq0\}$, i.e., the number of non-zero digits in $\bfeta$. 

  Let $z = \val(\bfeta)$. The expansion $\bfeta$ is said to be \emph{optimal}
  if it minimises the weight among all possible expansions of $z$, i.e., if the
  weight of $\bfeta$ is at most the weight of $\bfxi$ for all words $\bfxi$
  over $\calD$ with $\val(\bfxi) = z$.
\end{definition}

We will show an optimality result for $\calD$-\wNAF{}s in
Theorem~\ref{thm:optimality}, where the digit set comes from a tiling as in
Section~\ref{sec:tilings}. 

\begin{lemma}\label{lem:lim-Phi-m-is-0}
  We have
  \begin{equation*}
    \lim_{m\to\infty} \Phi^m(\Lambda):= \bigcap_{m\in\N_0}
  \Phi^m(\Lambda)= \{0\}.
  \end{equation*}
\end{lemma}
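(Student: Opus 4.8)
The plan is to exploit that $\Phi$ is expanding, so $\Phi^{-1}$ is a contraction in a suitable norm, and hence the only lattice point lying in $\Phi^m(\Lambda)$ for all $m$ must be $0$. First I would record the easy inclusion: since $\Phi(\Lambda)\subseteq\Lambda$, the sequence $(\Phi^m(\Lambda))_{m\ge0}$ is decreasing, $\Lambda\supseteq\Phi(\Lambda)\supseteq\Phi^2(\Lambda)\supseteq\cdots$, so the intersection is well-defined and certainly contains $0$ (as $0\in\Phi^m(\Lambda)$ for every $m$). It remains to show the reverse inclusion $\bigcap_{m}\Phi^m(\Lambda)\subseteq\{0\}$.

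For that, fix $\alpha\in\bigcap_{m\in\N_0}\Phi^m(\Lambda)$. For each $m$ there is $\beta_m\in\Lambda$ with $\alpha=\Phi^m(\beta_m)$, i.e.\ $\beta_m=\Phi^{-m}(\alpha)$ (using injectivity of $\Phi$, so $\Phi^{-m}$ is well-defined on $\R^n$ and $\Phi^{-m}(\alpha)\in\Lambda$). Now choose a vector norm $\|\,\cdot\,\|$ on $\R^n$ whose induced operator norm satisfies $\|\Phi^{-1}\|<1$; this is possible because $\Phi$ is expanding, so the spectral radius of $\Phi^{-1}$ is strictly less than $1$, and an operator norm approximating the spectral radius to within any prescribed $\varepsilon$ exists (this is exactly the device already used in Section~\ref{sec:tilings} and Section~\ref{sec:min-norm}). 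Then
\begin{equation*}
  \|\beta_m\| = \|\Phi^{-m}(\alpha)\| \le \|\Phi^{-1}\|^m\,\|\alpha\| \xrightarrow{m\to\infty} 0.
\end{equation*}
So the lattice points $\beta_m$ accumulate at $0$. Since $\Lambda$ is a discrete subset of $\R^n$, there is $\varepsilon>0$ such that the only lattice point of norm $<\varepsilon$ is $0$ itself; hence $\beta_m=0$ for all sufficiently large $m$, and therefore $\alpha=\Phi^m(\beta_m)=0$. This gives $\bigcap_{m}\Phi^m(\Lambda)\subseteq\{0\}$, completing the proof.

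There is no real obstacle here; the only point requiring a word of justification is the existence of the norm with $\|\Phi^{-1}\|<1$, which is standard (Householder's theorem / Gelfand's formula) and is in any case already invoked earlier in the paper, so it may simply be cited. One could alternatively avoid choosing a special norm and argue directly from $\Phi^{-m}\to 0$ entrywise (since all eigenvalues of $\Phi^{-1}$ have modulus $<1$), but the operator-norm argument is the cleanest and matches the style of the surrounding sections.
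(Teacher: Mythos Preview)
Your proof is correct and follows essentially the same approach as the paper's own proof: pick $\alpha$ in the intersection, set $\beta_m=\Phi^{-m}(\alpha)\in\Lambda$, use that $\Phi$ is expanding to get $\beta_m\to 0$, and invoke discreteness of $\Lambda$ to conclude $\beta_m=0$ eventually and hence $\alpha=0$. Your write-up is slightly more explicit about the choice of norm and the trivial inclusion $\{0\}\subseteq\bigcap_m\Phi^m(\Lambda)$, but the argument is the same.
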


\begin{proof}
  Let $\alpha \in \lim_{m\to\infty} \Phi^m(\Lambda) = \bigcap_{m\in\N_0}
  \Phi^m(\Lambda)$. Then there is a sequence $(\beta_m)_{m\in\N_0}$, all
  $\beta_m\in\Lambda$ and with $\beta_m = \Phi^{-m}(\alpha)$. As $\Phi$ is
  expanding,  we obtain $\beta_m \to 0$ as $m$ tends to infinity. The
  lattice $\Lambda$ is discrete, so $\beta_m=0$ for sufficiently large
  $m$. We conclude that $\alpha=0$.
\end{proof}

Now we define the digit set: We start with a subset $V$  of $\R^n$ tiling $\R^n$ by
$\Lambda$.
For a positive integer $w$ let
\begin{equation*}
  \widetilde{\calD} := 
  \{ \alpha\in \Lambda: \Phi^{-w}(\alpha)\in V \}
\end{equation*}
and
\begin{equation*}
  \widetilde{\calD}_{\mathrm{int}} := 
  \{ \alpha\in \Lambda: \Phi^{-w}(\alpha)\in \interior V \},
\end{equation*}
where $\interior V$ denotes the interior of $V$.  We choose a subset
$\calD^\bullet$ of $\widetilde{\calD}$ in such a way that $\calD^\bullet$
contains exactly one representative of every residue class modulo
$\Phi^w(\Lambda)$ which is not contained in $\Phi(\Lambda)$. We also set
$\calD:=\calD^\bullet\cup\{0\}$. This is the same construction as in
Section~\ref{sec:tilings}.

\begin{lemma}\label{lem:intV-singletons}
  Assume that $V \subseteq \Phi(V)$. Then each element of
  $\widetilde{\calD}_{\mathrm{int}} \setminus \{0\}$ has an expansion of
  weight~$1$.
\end{lemma}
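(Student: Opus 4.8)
The plan is to take $\alpha \in \widetilde{\calD}_{\mathrm{int}} \setminus \{0\}$ and show that the backwards-division map $T$ sends it to $0$ in exactly one step, i.e.\ that $d(\alpha) = \alpha$ and hence $T(\alpha) = 0$. By Lemma~\ref{lemma:w-NAF-backwards-division} this gives a $\calD$-\wNAF{} of $\alpha$ of length at most~$1$, which, since $\alpha \neq 0$, is a single nonzero digit and thus has weight~$1$.

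\emph{Step 1: $\alpha \notin \Phi(\Lambda)$.}
I would first argue that $\alpha$ is not contained in $\Phi(\Lambda)$; this is what makes $d(\alpha)$ a genuine nonzero digit rather than forcing $d(\alpha) = 0$. The key point is the hypothesis $V \subseteq \Phi(V)$, which iterates to $V \subseteq \Phi^m(V)$ for all $m \ge 0$ and hence, after applying $\Phi^{-w}$, to $\interior V \subseteq \Phi^m(\interior V)$ as well (using that $\Phi$ is a linear homeomorphism, so it maps interiors to interiors). Since $\Phi^{-w}(\alpha) \in \interior V$, for every $m$ there is $v_m \in \interior V$ with $\Phi^{-w}(\alpha) = \Phi^m(v_m)$; multiplying by $\Phi^w$ and using that $\Phi^{-w+m}$ maps lattice points to lattice points... actually the cleanest route is: $\Phi^{-w}(\alpha) \in V$ means $\alpha \in \widetilde{\calD}$, so $\alpha \in \Lambda$, and I want to show $\Phi^{-1}(\alpha) \notin \Lambda$. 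Suppose $\Phi^{-1}(\alpha) \in \Lambda$. Then $\Phi^{-w-1}(\alpha) \in \Phi^{-w}(\Lambda)$... hmm, let me instead use the tiling directly: if $\alpha = \Phi(\gamma)$ with $\gamma \in \Lambda$, then $\Phi^{-w}(\alpha) = \Phi^{-(w-1)}(\gamma)$ lies in $\interior V$; but $V \subseteq \Phi(V)$ gives $\Phi^{-1}(V) \subseteq V$, so I can pull back: the chain $\Phi^{-w}(\alpha), \Phi^{-(w-1)}(\gamma) = \Phi \cdot \Phi^{-w}(\alpha)$ being in $\interior V$ while also being $\Phi$ of an interior point, combined with the tiling property (translates of $V$ overlap only on boundaries) forces a contradiction with $\alpha \neq 0$ along the lines of Lemma~\ref{lemma:tiling-residue-system}. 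I would work this out carefully; this is the step I expect to be the main obstacle, since it is where the hypothesis $V \subseteq \Phi(V)$ is really used and one must be careful about interior-versus-boundary issues in the tiling.

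\emph{Step 2: $d(\alpha) = \alpha$ and conclusion.}
Once $\alpha \notin \Phi(\Lambda)$, the digit $d(\alpha)$ is the unique element of $\calD^\bullet$ congruent to $\alpha$ modulo $\Phi^w(\Lambda)$. Now $\Phi^{-w}(\alpha) \in \interior V$, so if $\alpha' \in \calD^\bullet \subseteq \widetilde{\calD}$ is that representative, then $\Phi^{-w}(\alpha)$ and $\Phi^{-w}(\alpha')$ both lie in $V$, are congruent modulo $\Phi^w(\Lambda)$ translated back to $\Lambda$, i.e.\ $\Phi^{-w}(\alpha') - \Phi^{-w}(\alpha) \in \Lambda$; by the second part of Lemma~\ref{lemma:tiling-residue-system}, if $\alpha \neq \alpha'$ then both $\Phi^{-w}(\alpha)$ and $\Phi^{-w}(\alpha')$ lie on $\partial V$, contradicting $\Phi^{-w}(\alpha) \in \interior V$. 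Hence $\alpha = \alpha' = d(\alpha)$, so $T(\alpha) = \Phi^{-1}(\alpha - d(\alpha)) = \Phi^{-1}(0) = 0$. By Lemma~\ref{lemma:w-NAF-backwards-division} with $\ell = 1$, the word $\eta_0 = d(\alpha) = \alpha$ is a $\calD$-\wNAF{} of $\alpha$, and since $\alpha \neq 0$ its weight is exactly~$1$.
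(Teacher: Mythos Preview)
Your Step~2 is sound and is exactly the interior/uniqueness argument the paper uses, but Step~1 is attempting to prove a statement that is false in general: not every $\alpha\in\widetilde{\calD}_{\mathrm{int}}\setminus\{0\}$ lies outside $\Phi(\Lambda)$. Take $\Lambda=\Z$, $\Phi(x)=2x$, $V=[-\tfrac12,\tfrac12]$, $w=3$. Then $\widetilde{\calD}_{\mathrm{int}}=\{-3,-2,-1,0,1,2,3\}$, and $\alpha=2$ satisfies $\Phi^{-w}(\alpha)=\tfrac14\in\interior V$ while $2=\Phi(1)\in\Phi(\Lambda)$. Hence $d(2)=0$ and $T(2)=1\neq 0$, so the plan of reaching $0$ in a single application of $T$ cannot work. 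The contradiction you sketch does not materialise: from $\alpha=\Phi(\gamma)$ and $\Phi^{-w}(\alpha)\in\interior V$ you only obtain $\Phi^{-w}(\gamma)=\Phi^{-1}(\Phi^{-w}(\alpha))\in\Phi^{-1}(\interior V)\subseteq\interior V$, i.e.\ $\gamma\in\widetilde{\calD}_{\mathrm{int}}$ as well. No tiling-overlap argument applies, because $\Phi^{-w}(\alpha)-\Phi^{-w}(\gamma)$ is not a lattice point (here it equals $\tfrac18$).

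The fix is small and uses exactly the ingredients you already have. Instead of forcing $\alpha\notin\Phi(\Lambda)$, choose the maximal $\ell\ge 0$ with $\beta:=\Phi^{-\ell}(\alpha)\in\Lambda$; such $\ell$ exists since $\Phi$ is expanding (equivalently, by Lemma~\ref{lem:lim-Phi-m-is-0}). Then $\beta\notin\Phi(\Lambda)$ by maximality, and iterating $V\subseteq\Phi(V)$ gives $\Phi^{-w}(\beta)=\Phi^{-\ell}\bigl(\Phi^{-w}(\alpha)\bigr)\in\Phi^{-\ell}(\interior V)\subseteq\interior V$. Now your Step~2 applies verbatim to $\beta$ in place of $\alpha$: by Lemma~\ref{lemma:tiling-residue-system}, $\beta$ is the unique element of $\widetilde{\calD}$ in its residue class, hence $\beta\in\calD^\bullet$, and $\alpha=\Phi^\ell(\beta)$ has the weight-$1$ expansion $\beta\underbrace{0\cdots0}_{\ell}$. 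This is precisely the paper's proof. In the example above one gets $\ell=1$, $\beta=1$, and the $\calD$-\wNAF{} of $2$ is $10$.
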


\begin{proof}
  Let $\alpha\in\widetilde{\calD}_{\mathrm{int}} \setminus \{0\}$, and let
  $\beta = \Phi^{-\ell}(\alpha) \in\Lambda$ such that the non-negative integer
  $\ell$ is maximal. Therefore $\beta \not\in \Phi(\Lambda)$. We have that
  $\Phi^{-w}(\beta) = \Phi^{-w-\ell}(\alpha)$ is in the interior of
  $\Phi^{-\ell}(V)$. Using $V \subseteq \Phi(V)$ yields $\Phi^{-w}(\beta) \in
  \interior V$, and therefore, by
  Lemma~\ref{lemma:tiling-residue-system}, $\beta\in\calD^\bullet$. Thus
  $\alpha = \Phi^\ell(\beta)$ has an expansion of weight~$1$.
\end{proof}

\begin{theorem}\label{thm:optimality}
  Assume that $V \subseteq \Phi(V)$, $V=-V$ and that there are a vector norm
  $\|\,\cdot\,\|$ on $\R^n$ and positive reals $r$ and $R$ such that 
  \begin{equation}\label{eq:V-bounds-opt}
    \{ x\in\R^n : \|x\|\le r\} \subseteq V \subseteq \{x\in\R^n: \|x\|\le R\}
  \end{equation}
  and such that the induced operator
  norm (also denoted by $\|\,\cdot\,\|$) fulfils $\|\Phi^{-1}\|<\frac{r}{R}$.

  If $w$ is a positive integer such that
  \begin{equation}\label{eq:w-opt-bound}
    \|\Phi^{-1}\|^w < \frac12 \left( \frac{r}{R} - \|\Phi^{-1}\| \right)
  \end{equation}
  and $\calD$ is a \wNADS{}, then the $\calD$-\wNAF{}-expansion of each element
  of $\Lambda$ is optimal.
\end{theorem}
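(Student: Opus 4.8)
The plan is to prove optimality by induction on the weight of a competing expansion, using a ``digit-exchange'' argument that is standard for such results (it goes back to Avanzi and to Heuberger's earlier work on integer $w$-NAFs). Fix $z\in\Lambda$ with $\calD$-\wNAF{} $\bfeta$ of weight $v$, and suppose $\bfxi$ is another expansion of $z$ with weight $\le v$; we must show its weight is exactly $v$. The strategy is to localise: since $z=\val(\bfeta)=\val(\bfxi)$ and $\eta_0=\xi_0$ is forced modulo $\Phi(\Lambda)$ (and, if nonzero, essentially modulo $\Phi^w(\Lambda)$ by the \wNAF-condition on $\bfeta$), we want to ``peel off'' the least significant part and reduce to a shorter instance. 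The clean way to do this is to show that among all weight-minimal expansions of $z$, some weight-minimal expansion agrees with the $\calD$-\wNAF{} in its trailing block; then induction finishes. The role of the hypotheses $V=-V$ (so $\calD=-\calD$, which lets us negate digits freely), $V\subseteq\Phi(V)$ (so that interior digits give weight-$1$ expansions, Lemma~\ref{lem:intV-singletons}), and the metric bounds \eqref{eq:V-bounds-opt}, \eqref{eq:w-opt-bound} is precisely to make this peeling step work.

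Concretely, here is how I would carry it out. First I would set up the iteration map $T$ and its norm contraction: from \eqref{eq:w-opt-bound} one gets $\|\Phi^{-1}\|<r/R<1$, so by the argument in the proof of Theorem~\ref{thm:tiling-wNADS}, the $\calD$-\wNAF{} digits $\eta_k=d(T^k(z))$ satisfy $\|T^k(z)\|$ bounded, and more importantly the ``tail value'' $\sum_{k\ge K}\Phi^{k}(\eta_k)$ has controlled $\Phi^{-K}$-size. Next, the key geometric claim: if $\bfxi$ is \emph{any} expansion of $z$ and $j$ is the position of its least significant nonzero digit, then $\Phi^{-j-w}(\val(\bfxi))$ — or rather $\Phi^{-j}$ applied to the relevant partial value — lands so close to $\xi_j$ (within distance $<r$ in the norm, by summing the geometric tail with ratio $\|\Phi^{-1}\|^w$ controlled by \eqref{eq:w-opt-bound}) that $\Phi^{-w}$ of it lies in $\interior V$. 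Combined with $V=-V$ this pins down which residue class mod $\Phi^w(\Lambda)$ is being hit and shows the trailing digit of any expansion is ``morally'' the $\calD$-\wNAF{} digit $\eta_0$ up to a correction that can be absorbed without increasing weight. Then I would perform the exchange: replace the trailing portion of $\bfxi$ by the trailing portion of $\bfeta$ at no cost in weight, subtract, and recurse on $T^{w}(z)$ (or the appropriate shift), whose $\calD$-\wNAF{} is $\eta_{\ell-1}\ldots\eta_{w}$ and has weight $v-1$ (if $\eta_0\ne0$) or $v$ (if $\eta_0=0$, shifting by one). Induction on $\ell+v$ then yields $\mathrm{weight}(\bfxi)\ge v$.

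The main obstacle, and where care is genuinely needed, is the exchange step when the competing expansion $\bfxi$ has \emph{several} nonzero digits within a window of length $w$ that the \wNAF{} would force to be a single digit. One has to argue that such a clustered block can always be replaced, without increasing weight, by a block that agrees with the \wNAF{} structure — this is where $V\subseteq\Phi(V)$ and Lemma~\ref{lem:intV-singletons} enter: a value that is a single interior digit scaled by $\Phi^\ell$ has a weight-$1$ expansion, so a cluster of $\ge 2$ digits summing to such a value is strictly suboptimal and can be collapsed. Making this rigorous requires a careful bookkeeping of carries: when we collapse or exchange a block, the ``overflow'' must be shown to stay inside $\widetilde{\calD}_{\mathrm{int}}$ (so that it again has a cheap expansion), and this is exactly the quantitative content of \eqref{eq:w-opt-bound} — the factor $\tfrac12$ leaves room for the overflow term. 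I would handle this by phrasing everything in terms of $T$ and a single master inequality bounding $\|\Phi^{-w}(\text{relevant partial sum})\|$ strictly below $r$, uniformly along the recursion, so that the interior condition is preserved at every step; verifying that inequality is the crux, and the rest is the (routine but somewhat lengthy) induction on weight.
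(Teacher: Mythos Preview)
Your approach is genuinely different from the paper's. The paper does not carry out a direct digit-exchange induction at all; instead it invokes a black-box optimality criterion from a companion paper (Heuberger and Krenn~\cite{Heuberger-Krenn:2011:wnafs-optimality}) stated just before the proof. That criterion asks for sets $U$ and $S$ with $\calD\subseteq U$, $-\calD\subseteq U$, $U\subseteq\Phi(U)$, every lattice point in $S$ a singleton, and
\[
\bigl(\Phi^{-1}(U)+\Phi^{-w}(U)+\Phi^{-w}(U)\bigr)\cap\Lambda\subseteq S\cup\{0\}.
\]
The entire proof in the paper is then: take $U=\Phi^w(V)$ and $S=\Phi^w(\interior V)\setminus\{0\}$, observe Lemma~\ref{lem:lim-Phi-m-is-0} and Lemma~\ref{lem:intV-singletons} handle the side conditions, and check the displayed containment by the one-line norm estimate $\|\Phi^{-1}\|R+2\|\Phi^{-1}\|^w R<r$, which is exactly~\eqref{eq:w-opt-bound}. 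So the hypotheses $V=-V$, $V\subseteq\Phi(V)$, and \eqref{eq:w-opt-bound} are tailored to make that verification immediate.

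What you are sketching is, in effect, a direct reproof of the cited criterion specialised to this situation. That is a legitimate route, and your identification of the ``master inequality'' is on target --- it is precisely the Minkowski-sum containment above. But your exchange step, as written, is not yet a proof: you have not said what happens when the competing expansion $\bfxi$ has exactly one nonzero digit in positions $0,\ldots,w-1$ (namely $\xi_0$, which is forced to be nonzero when $\eta_0\neq0$) but $\xi_0\neq\eta_0$; here the swap produces a carry without any weight saving in the block to pay for it, and one must argue that the carry together with the next digits of $\bfxi$ can still be handled at no net cost. This is where the three-fold sum $\Phi^{-1}(U)+\Phi^{-w}(U)+\Phi^{-w}(U)$ in the cited criterion comes from, and it is the substantive combinatorial content you would have to supply. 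The paper sidesteps all of this by citing the criterion; if you want a self-contained argument, you should expect to reproduce a fair amount of~\cite{Heuberger-Krenn:2011:wnafs-optimality}.
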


The proof relies on the following optimality result.

\begin{unnumbered_theorem}[Heuberger and Krenn~\cite{Heuberger-Krenn:2011:wnafs-optimality}]
  If
  \begin{equation}\label{eq:cond-lim-Phi-m-is-0}
    \lim_{m\to\infty} \Phi^m(\Lambda) = \{0\},
  \end{equation}
  and if there are sets $U$ and $S$ such that $\calD \subseteq U$, $-\calD
  \subseteq U$, $U \subseteq \Phi(U)$, all elements in $S\cap\Lambda$ are
  singletons (have expansions of weight~$1$) and if
  \begin{equation*}
    \left( \Phi^{-1}(U) + \Phi^{-w}(U) + \Phi^{-w}(U) \right)
    \cap \Lambda \subseteq S \cup \{0\},
  \end{equation*}
  then every $\calD$-\wNAF{} is optimal.
\end{unnumbered_theorem}

\begin{proof}[Proof of Theorem~\ref{thm:optimality}]
  Condition~(\ref{eq:cond-lim-Phi-m-is-0}) is shown in
  Lemma~\ref{lem:lim-Phi-m-is-0}. For the second condition, we choose
  $U=\Phi^w(V)$ and $S=\Phi^w(\interior V)\setminus\{0\}$, and we show
  \begin{equation*}
    \left( \Phi^{-1}(V) + \Phi^{-w}(V) + \Phi^{-w}(V) \right) 
    \subseteq \interior V.
  \end{equation*}
  Optimality then follows, since each element in $S\cap\Lambda$ has a
  weight~$1$ expansion by Lemma~\ref{lem:intV-singletons}. So let $z$ be an
  element of the left hand side of the inclusion
  above. Using~\eqref{eq:V-bounds-opt} and~(\ref{eq:w-opt-bound}) yields
  \begin{equation*}
    \|z\| \leq \|\Phi^{-1}\| R + 2 \|\Phi^{-1}\|^w R < r,
  \end{equation*}
  therefore $z$ is in the interior of $V$. 
\end{proof}


\bibliography{cheub}
\bibliographystyle{amsplain}

\end{document}